\documentclass[11pt]{article}

\usepackage[margin=1in]{geometry}
\usepackage[T1]{fontenc}
\DeclareUnicodeCharacter{2115}{\ensuremath{\mathbb{N}}}   
\DeclareUnicodeCharacter{2264}{\ensuremath{\le}}          
\DeclareUnicodeCharacter{2227}{\ensuremath{\wedge}}       
\DeclareUnicodeCharacter{2200}{\ensuremath{\forall}}      
\DeclareUnicodeCharacter{2192}{\ensuremath{\to}}          
\usepackage{amsmath,amssymb,amsthm,mathtools}
\usepackage{enumitem}
\usepackage{xcolor}
\definecolor{midnightblue}{RGB}{25,25,112}
\usepackage[colorlinks=true,allcolors=midnightblue]{hyperref}
\usepackage[backend=biber,style=numeric,giveninits=true,url=false,
            maxbibnames=99]{biblatex}
\DeclareSourcemap{%
  \maps[datatype=bibtex]{%
    \map[overwrite]{%
      \step[fieldsource=author,
            match=\regexp{\AThe\smathlib\sCommunity\z},
            replace=\regexp{\{The\x20mathlib\x20Community\}}]%
    }%
    \map[overwrite]{%
      \step[fieldsource=doi,
            match=\regexp{\Ahttps?://(dx\.)?doi\.org/},
            replace={}]%
    }%
    \map[overwrite]{%
      \step[fieldsource=entrykey,
            match=\regexp{\Akolokolnikov2014maximizingalgebraicconnectivitycertain\z},
            final]%
      \step[fieldset=addendum,
            fieldvalue={Published in \emph{Linear Algebra and its
              Applications} \textbf{471} (2015), 122--140,
              \url{https://doi.org/10.1016/j.laa.2014.12.023}}]%
    }%
    \map[overwrite]{%
      \step[fieldsource=entrykey,
            match=\regexp{\Agao2025semanticsearchenginemathlib4\z},
            final]%
      \step[fieldset=addendum,
            fieldvalue={Published in \emph{Findings of the Association for
              Computational Linguistics: EMNLP 2024}, Association for
              Computational Linguistics, pp.~8001--8013,
              \url{https://doi.org/10.18653/v1/2024.findings-emnlp.470}}]%
    }%
  }%
}
\numberwithin{equation}{section}

\newtheorem{theorem}{Theorem}[section]
\newtheorem{lemma}[theorem]{Lemma}
\newtheorem{proposition}[theorem]{Proposition}
\newtheorem{corollary}[theorem]{Corollary}
\newtheorem{conjecture}[theorem]{Conjecture}
\theoremstyle{definition}
\newtheorem{definition}[theorem]{Definition}
\theoremstyle{remark}

\newcommand{\ac}{\lambda_2}

\title{Maximizing Algebraic Connectivity with \(2(n-2)\) Edges}

\author{
Zeru Zhu \and Jinzheng Li \and Yuanjie Ren \and Ji Liu
}

\date{}

\begin{document}

\maketitle
\begingroup
\renewcommand{\thefootnote}{}
\footnotemark
\footnotetext{Zeru Zhu: Department of Applied Mathematics \& Statistics,
Stony Brook University, \texttt{zeru.zhu@stonybrook.edu}; Jinzheng Li:
Department of Physics, Northeastern University,
\texttt{li.jinzh@northeastern.edu}; Yuanjie Ren: Department of Physics,
Massachusetts Institute of Technology, \texttt{yuanjie@mit.edu}; Ji Liu:
Department of Electrical and Computer Engineering, Stony Brook University,
\texttt{ji.liu@stonybrook.edu}. The first three authors contributed equally.}
\endgroup
\setcounter{footnote}{0}

\begin{abstract}
Kolokolnikov conjectured that, among all simple graphs on \(n\) vertices
with exactly \(2(n-2)\) edges, the complete bipartite graph 
maximizes algebraic connectivity.  This paper proves the conjecture. 
The underlying Lean~4 formalization was
generated with MerLean and checked by the Lean kernel.
\end{abstract}

\noindent\textbf{Keywords.}
Algebraic connectivity; spectral graph theory; formalized mathematics; AI-assisted theorem proving.

\medskip
\noindent\textbf{MSC 2020.}
Primary 05C50; secondary 05C35, 05C38, 68V15, 68V20.

\section{Introduction}\label{sec:introduction}

For a finite simple graph \(G\) on \(n\) vertices, let \(A(G)\) and
\(D(G)\) be its adjacency and diagonal degree matrices, respectively, and
let \(L(G)=D(G)-A(G)\) be its Laplacian matrix.  Order the Laplacian
eigenvalues as
\(0=\lambda_1(G)\le\lambda_2(G)\le\cdots\le\lambda_n(G)\).  The
second-smallest Laplacian eigenvalue \(\lambda_2(G)\) is called the
\emph{algebraic connectivity} of \(G\).  Introduced by
Fiedler~\cite{fiedler1973}, this parameter relates spectral information to
the connectivity and expansion of a graph.  A basic extremal problem asks
how large \(\ac(G)\) can be when the order and size of \(G\) are prescribed.

For positive integers \(p\) and \(q\), let \(K_{p,q}\) denote the complete
bipartite graph with vertex classes of sizes \(p\) and \(q\).
The graph \(K_{2,n-2}\) has \(n\) vertices, \(2(n-2)\) edges, and algebraic
connectivity \(2\), as verified in Lemma~\ref{lem:bipartite-spectrum} below.
Kolokolnikov conjectured that no graph with the same order and size has
larger algebraic connectivity~\cite[Conjecture~1.5]{kolokolnikov2014maximizingalgebraicconnectivitycertain}.
Computational searches for small orders reported in the same paper support
this conjecture.

\begin{conjecture}[Kolokolnikov]
\label{conj:kolokolnikov}
Let \(n\ge4\).  Every finite simple graph \(G\) on \(n\) vertices
with \(2(n-2)\) edges has \(\ac(G)\le2\), with equality attained by
\(K_{2,n-2}\).
\end{conjecture}

Our main result proves the conjecture in full.

\begin{theorem}\label{thm:main}
Let \(n\ge4\).  Every finite simple graph \(G\) on \(n\) vertices
with \(2(n-2)\) edges has \(\ac(G)\le2\).
\end{theorem}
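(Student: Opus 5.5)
The plan is to bound \(\ac(G)\) through the Courant--Fischer characterization
\(\ac(G)=\min\{x^{\mathsf T}L(G)x/x^{\mathsf T}x : x\perp\mathbf 1,\ x\ne0\}\),
combined with Fiedler's classical inequality \(\ac(G)\le\kappa(G)\), where \(\kappa(G)\) is the vertex connectivity. Write \(d_v\) for the degree of \(v\). The edge count gives \(\sum_v d_v=4n-8\), that is, \(\sum_v(d_v-4)=-8\). So the graph has a strictly negative total ``degree deficit'' relative to \(4\), and the idea is to turn that deficit into a test vector with Rayleigh quotient at most \(2\).

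\emph{Step 1 (easy reductions).} If \(G\) is disconnected, then \(\ac(G)=0\). If \(\kappa(G)\le2\), then Fiedler's bound gives \(\ac(G)\le\kappa(G)\le2\) directly. It therefore suffices to treat \(3\)-connected \(G\), so \(\delta(G)\ge3\). I also record the two-vertex test vector: for nonadjacent \(u,v\), take \(x_u=1\), \(x_v=-1\) and \(x=0\) elsewhere. This gives \(\ac(G)\le(d_u+d_v)/2\). It settles every case with two nonadjacent vertices of degree sum at most \(4\), but those are already excluded by Step 1. The real content is the \(3\)-connected case.

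\emph{Step 2 (structure in the \(3\)-connected case).} Now every \(d_v\ge3\) and \(\sum_v(d_v-4)=-8\). Let \(T\) be the set of degree-\(3\) vertices. Then \(|T|\ge 8+\sum_{d_v\ge5}(d_v-4)\ge8\), so there are many degree-\(3\) vertices, and the high-degree vertices are paid for by additional degree-\(3\) vertices. I would pick two degree-\(3\) vertices \(u,v\) whose closed neighbourhoods are disjoint and nonadjacent. If no such pair exists, the degree-\(3\) vertices are packed within distance \(2\) of each other; that is a bounded configuration to be handled separately, for instance by a test vector that is \(+1\) on one part of \(T\) and \(-1\) on another.

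\emph{Step 3 (local test vector).} For such \(u,v\), I would use
\(x_u=1\), \(x_v=-1\), \(x_w=a\) on \(N(u)\), \(x_w=-a\) on \(N(v)\), and \(0\) elsewhere, with \(a\in(0,1)\) to be optimized. This vector is orthogonal to \(\mathbf 1\) by symmetry. The quotient is
\[
\frac{6(1-a)^2+a^2\bigl(E_{\mathrm{out}}(u)+E_{\mathrm{out}}(v)\bigr)+(\text{cross terms})}{2+6a^2},
\]
where \(E_{\mathrm{out}}(u)=\sum_{w\in N(u)}(d_w-1)\) counts edges leaving \(N(u)\), with edges inside \(N(u)\) contributing \(0\). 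The requirement is that this is \(\le2\) for a suitable \(a\). That holds when the neighbours of \(u\) and \(v\) themselves have small degree, roughly when \(E_{\mathrm{out}}\) is at most about \(6\)--\(8\) on average. Using the deficit identity \(\sum_v(d_v-4)=-8\), I would average over the choices of \(u,v\in T\) to show that some pair has sparse enough neighbourhoods. Alternatively, I would use a weighted combination in which the weight of each vertex is tied to its deficit \(4-d_v\).

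\emph{Main obstacle.} Step 3 is the crux. Making the averaging argument yield the constant exactly \(2\) in every configuration is delicate, particularly for graphs near \(K_{2,n-2}\), where equality forces the bound to be tight. It is equally delicate for \(3\)-connected graphs in which the degree-\(3\) vertices cluster around a few very high-degree hubs. I expect to need a separate treatment of the hub case, with a test vector constant on the hub-neighbourhood classes, mimicking the Fiedler vector of \(K_{2,n-2}\) or \(K_{3,m}\)-like structures. Step 3 is also where a case analysis, possibly machine-checked, would be unavoidable.
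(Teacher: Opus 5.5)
Your Step~1 is correct: Fiedler's bound $\lambda_2\le\kappa(G)$ applies because $n(n-1)/2\ne2(n-2)$, so $G$ is never complete. But the $3$-connected case carries the whole theorem, and there the proposal is only a plan. The averaging in Step~3 is never carried out, the deficit-weighted alternative is not specified, and the ``packed'' case of Step~2 is asserted to be bounded without any argument or method.

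Moreover, the Step~3 vector as written cannot work in general. Suppose $N[u]$ and $N[v]$ are disjoint with no edges between them, and put $E=E_{\mathrm{out}}(u)+E_{\mathrm{out}}(v)$. Then the requirement is $6(1-a)^2+a^2E\le4+12a^2$, i.e.\ $(E-6)a^2-12a+2\le0$. This is solvable in $a$ exactly when $E\le24$, an average of $12$ per side. Edges between $N(u)$ and $N(v)$ only increase the numerator.

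Here is a family where it fails. Take a $4$-connected $4$-regular graph $R$ on a set $Q$ of $d\ge8$ vertices, say the square of $C_d$. Subdivide each of its $2d$ edges by a new vertex, split the subdivision vertices into two classes $T_1,T_2$ of size $d$, and join every vertex of $T_i$ to a new hub $z_i$.
\begin{itemize}
\item The resulting graph is $3$-connected, with $n=3d+2$ vertices and $6d=2(n-2)$ edges.
\item Its degree-$3$ vertices are exactly $T_1\cup T_2$, and each is adjacent to a hub of degree $d$.
\item Hence $E_{\mathrm{out}}(u)=(d-1)+3+3=d+5$ for every degree-$3$ vertex $u$.
\item Every pair with disjoint closed neighbourhoods has $E=2d+10>24$, since $\{z_1,z_2\}\cup Q$ is independent.
\end{itemize}
So no choice of $a$, and no averaging over pairs, yields a certificate. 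The graph does satisfy $\lambda_2\le2$: the vector equal to $1$ on $\{z_1\}\cup T_1$, $-1$ on $\{z_2\}\cup T_2$, and $0$ on $Q$ has quotient $4d/(2d+2)<2$. Any repair of Step~3 must change the vector, not just optimise it. It must then still be backed by an argument that a suitable configuration always exists.

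That existence argument is the missing idea. The paper uses the two-cluster vector of Proposition~\ref{lem:two-cluster}. It pits one small configuration $A$ against the entire remainder $B=V\setminus(A\cup F)$, where $F=N(A)\setminus A$. The far-side condition $e(B,F)\le2|B|$ then follows from the global excess identity $\sum_v(d(v)-3)=n-8$, via $e(B,F)\le2|F|+n-8-\varepsilon(A)$. So a single sparse local configuration suffices, instead of two mutually distant ones. This excludes:
\begin{itemize}
\item vertices of degree at most $2$;
\item adjacent degree-$3$ vertices;
\item moderate-degree vertices with $d-2$ degree-$3$ neighbours;
\item decorated edges;
\item short cycles among vertices of degree at most $4$.
\end{itemize}
These exclusions combine with the count $|D_3|=8+\sum_{d(v)\ge5}(d(v)-4)$, which is your Step~2 count and is an equality once $\delta\ge3$. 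Together they give an incidence census that is contradictory outright for $32\le n\le49$. For $n\ge48$, the census combines with the Alon--Hoory--Linial irregular Moore bound to force a short cycle among the vertices of degree at most $4$, which the cycle certificate forbids. Orders $n\le31$ need separate shared-hub, sparse-set, and cut arguments. Your plan has no counterpart to this global counting, and that counting is where the theorem is actually proved.
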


\begin{samepage}
Combining Theorem~\ref{thm:main} with
Lemma~\ref{lem:bipartite-spectrum} gives the extremal conclusion.

\begin{corollary}\label{cor:extremal}
For every \(n\ge4\), the maximum algebraic connectivity among finite
simple graphs on \(n\) vertices with \(2(n-2)\) edges is \(2\), and
\(K_{2,n-2}\) is a maximizer.
\end{corollary}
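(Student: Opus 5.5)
The corollary follows by combining an upper bound valid for all admissible graphs with one graph that attains it. I will take the upper bound from Theorem~\ref{thm:main} and the attaining graph $K_{2,n-2}$, whose spectrum Lemma~\ref{lem:bipartite-spectrum} supplies.

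First I check that $K_{2,n-2}$ is admissible. It has $2+(n-2)=n$ vertices. Every vertex of the class of size $2$ is adjacent to every vertex of the class of size $n-2$, and there are no other edges, so it has exactly $2(n-2)$ edges. Hence it belongs to the family over which the maximum is taken.

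Next I compute $\ac(K_{2,n-2})$ from Lemma~\ref{lem:bipartite-spectrum}. For $K_{p,q}$ the Laplacian spectrum is $0$, then $q$ with multiplicity $p-1$, then $p$ with multiplicity $q-1$, then $p+q$. With $p=2$ and $q=n-2$ this gives the spectrum $0,\ n-2,\ 2^{(n-3)},\ n$. Since $n\ge4$, the multiplicity $n-3$ of the eigenvalue $2$ is at least $1$, so $2$ really occurs. Also $n-2\ge2$ and $n>2$, so every nonzero eigenvalue is at least $2$. The eigenvalue $0$ is simple, so the second-smallest eigenvalue is $\ac(K_{2,n-2})=2$.

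Finally, Theorem~\ref{thm:main} gives $\ac(G)\le 2$ for every simple graph $G$ on $n$ vertices with $2(n-2)$ edges. So the supremum of $\ac$ over this family is at most $2$, and $K_{2,n-2}$ attains it. The family is finite and nonempty, so the maximum exists and equals $2$, with $K_{2,n-2}$ a maximizer.

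The only point needing care is the use of $n\ge4$ to guarantee $n-3\ge1$ and $n-2\ge2$. For $n=3$ the graph would be $K_{2,1}$ with $\ac=1$.
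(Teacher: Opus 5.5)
Your proposal is correct and takes the same approach as the paper, which obtains the corollary by combining the upper bound of Theorem~\ref{thm:main} with Lemma~\ref{lem:bipartite-spectrum}. Your explicit spectrum $0,\ n-2,\ 2^{(n-3)},\ n$ for $K_{2,n-2}$, together with the edge count, is exactly the content of that lemma.
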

\end{samepage}

Corollary~\ref{cor:extremal} proves maximality, not uniqueness; no
classification of the equality cases is asserted here.

Previous work on algebraic-connectivity maximization with fixed order and size
gives sufficient conditions for complete bipartite and, more generally,
complete multipartite graphs to be extremal
\cite{ogiwara2017maximizing,shahbaz2023algebraic}.  These criteria include
verifications of the present conjecture through \(n=8\), but do not cover all
orders.

The first version of this paper \cite{zhu2026largeorder} gave a self-contained proof for \(n\ge123\) and reported a Lean formalization for \(n\ge4\), while deferring a natural language treatment of \(4\le n\le122\). The current version gives a complete self-contained mathematical proof for \(n\ge4\). Three days after our first version appeared, two groups of researchers, Chi, Wang, and Zheng \cite{chi2026kolokolnikov} and Cioabă et al. \cite{cioaba2026kolokolnikov}, also posted preprints claiming complete proofs of the Kolokolnikov conjecture. The three papers follow different overall proof strategies.  

A Lean formalization of Conjecture~\ref{conj:kolokolnikov} for every
\(n\ge4\) has been produced with MerLean \cite{li2026merleanproverrecursiveloopingharness,ren2026merleanagenticframeworkautoformalization}%
\footnote{\url{https://github.com/MerLeanProver/MerLean}} and checked by the
Lean kernel\footnote{\url{https://github.com/MerLeanProver/ACMaxConjecture}}.
The present paper reorganizes that development as a self-contained
mathematical proof.  Appendix~\ref{sec:formalization} records the formal
verification and summarizes the MerLean system; a full account of its
architecture and evaluation will be given separately.

The proof begins in Section~\ref{sec:certificates} with explicit
Rayleigh-quotient test vectors that impose strong local restrictions on a
hypothetical counterexample.  Section~\ref{sec:census} translates those
restrictions into global degree and incidence inequalities.  For \(n\ge48\),
Sections~\ref{sec:cycles} and~\ref{sec:obstruction-closure} combine these counts
with an exact non-backtracking Moore bound to force a short cycle that the
spectral certificates forbid.  Section~\ref{sec:obstruction-closure} also uses
a sharper incidence count for \(32\le n\le49\), while
Section~\ref{sec:orders-below-thirty-two} completes \(4\le n\le31\) with local
sparse-set and cut arguments.

\section{Spectral certificates and the obstruction class}
\label{sec:certificates}

The proof proceeds by contradiction.  This section
develops the Rayleigh-quotient certificates used to exclude vertices of
degree \(\le2\), adjacent vertices of degree \(3\), and several other
local configurations.  Together, these restrictions reduce a hypothetical
counterexample to the degree-\(3\)-separated obstruction defined at the end
of the section.

Throughout the proof, \(G=(V,E)\) is a finite simple graph and \(n=|V|\).
For \(v\in V\), write \(N(v)\) for its neighborhood,
\(d(v)=|N(v)|\) for its degree, and
\(\delta(G)=\min_{v\in V}d(v)\).  For \(U\subseteq V\), let
\(N(U)=\bigcup_{v\in U}N(v)\), let \(G[U]\) be the subgraph induced by
\(U\), and let \(e(U)\) be the number of edges with both ends in \(U\).
If \(A,B\subseteq V\) are disjoint, then \(e(A,B)\) denotes the number of
edges with one end in \(A\) and the other in \(B\).

\subsection{Variational and separation certificates}

\begin{lemma}[The extremal example]\label{lem:bipartite-spectrum}
For every \(n\ge4\), the graph \(K_{2,n-2}\) has algebraic connectivity~\(2\).
\end{lemma}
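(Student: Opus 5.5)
The plan is to compute the full Laplacian spectrum of \(K_{2,n-2}\) by exhibiting an explicit eigenbasis. Write the vertex classes as \(A=\{a_1,a_2\}\) and \(B=\{b_1,\dots,b_{n-2}\}\). Then every vertex of \(A\) has degree \(n-2\), and every vertex of \(B\) has degree \(2\). We produce \(n\) linearly independent eigenvectors, read off the eigenvalues, and check that the second smallest equals \(2\).

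\emph{Vectors supported on \(B\) with zero sum.} Let \(x\) vanish on \(A\) and satisfy \(\sum_j x(b_j)=0\). At a vertex \(a_i\) we get \((Lx)(a_i)=-\sum_j x(b_j)=0\). At a vertex \(b_j\) we get \((Lx)(b_j)=2x(b_j)-\bigl(x(a_1)+x(a_2)\bigr)=2x(b_j)\). Hence \(Lx=2x\), and this subspace has dimension \(n-3\).

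\emph{Vectors supported on \(A\) with zero sum.} Take \(x(a_1)=1\), \(x(a_2)=-1\), and \(x\equiv0\) on \(B\). At \(a_i\) we get \((Lx)(a_i)=(n-2)x(a_i)\). At \(b_j\) we get \((Lx)(b_j)=-(1-1)=0\). This gives the eigenvalue \(n-2\).

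\emph{Vectors constant on each class.} Let \(x=\alpha\) on \(A\) and \(x=\beta\) on \(B\). Then \(L\) acts on \((\alpha,\beta)\) as the \(2\times2\) matrix
\[
\begin{pmatrix} n-2 & -(n-2)\\ -2 & 2\end{pmatrix}.
\]
Its eigenvalues are \(0\), with the all-ones vector, and the trace \(n\), with eigenvector \(\alpha=n-2\), \(\beta=-2\).

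These \(n-3+1+2=n\) vectors are mutually orthogonal, since they lie in orthogonal subspaces, so they form an eigenbasis. The spectrum is therefore \(\{0,\,2^{(n-3)},\,n-2,\,n\}\).

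For \(n\ge4\) we have \(n-3\ge1\), so \(2\) actually occurs as an eigenvalue. Moreover \(n-2\ge2\) and \(n>2\), so every nonzero eigenvalue is at least \(2\). Hence \(\ac(K_{2,n-2})=2\).

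The only delicate point is the edge case \(n=4\), where \(n-2=2\) and the eigenvalue \(2\) has multiplicity \(2\). This does not affect the conclusion. There is no real obstacle beyond confirming that \(2\) is attained and that no nonzero eigenvalue lies strictly between \(0\) and \(2\); the explicit eigenbasis settles both.
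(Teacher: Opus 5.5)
Your proposal is correct and follows essentially the same route as the paper's proof. Both decompose \(\mathbb{R}^V\) into zero-sum vectors on the large class (eigenvalue \(2\), multiplicity \(n-3\)), the zero-sum vector on the two-element class (eigenvalue \(n-2\)), and the class-constant vectors (eigenvalues \(0\) and \(n\)); your explicit \(2\times2\) quotient matrix and the remark on \(n=4\) only add detail to the paper's computation.
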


\begin{proof}
Let \(U,W\) be the two vertex classes, with \(|U|=2\) and \(|W|=n-2\).
The vectors supported on \(W\) and summing to zero form an
\((n-3)\)-dimensional subspace of eigenvectors with eigenvalue \(2\).  The
constant vector
has eigenvalue \(0\); the zero-sum vectors supported on \(U\) give the
eigenvalue \(n-2\); and the remaining one-dimensional space, consisting of
vectors constant on each class and orthogonal to the constant vector, has
eigenvalue \(n\).  Thus the second Laplacian eigenvalue is \(2\).
\end{proof}

By the Courant--Fischer theorem (see, for example,
\cite{brouwerhaemers2012}), the algebraic connectivity satisfies
\begin{equation}\label{eq:variational}
  \ac(G)
  =
  \min_{\substack{x\in\mathbb{R}^{V}\setminus\{0\}\\
                  \sum_{v\in V}x_v=0}}
  \frac{\displaystyle\sum_{uv\in E}(x_u-x_v)^2}
       {\displaystyle\sum_{v\in V}x_v^2}.
\end{equation}
Here \(x\) is one real vector indexed by the vertices of \(G\), and \(x_v\)
is its coordinate at \(v\).  The numerator is the Dirichlet energy of
\(x\): only edges whose endpoints receive different values contribute.
Our certificates therefore use vectors that are constant on large vertex
sets and change value only across controlled boundaries.

In particular, any nonzero vector \(x\) with coordinate sum zero and
\(\sum_{uv\in E}(x_u-x_v)^2\le2\sum_{v\in V}x_v^2\) certifies that
\(\ac(G)\le2\).  We use this observation repeatedly.

The following proposition isolates the Rayleigh-quotient argument used in all
subsequent local reductions.  The set \(F\) separates two nonempty sets
\(A\) and \(B\).  For connected \(G\), the proposition is the special case of
a more general inequality of Liu et al.\ in which no edge joins \(A\) and
\(B\) directly~\cite[Lemma~3.2]{liu2014}; we include the short direct proof
in the notation used throughout this paper.

\begin{proposition}[Two-cluster certificate]\label{lem:two-cluster}
Let \(A,B\subseteq V\) be disjoint nonempty sets with no edge between
them, and put \(F=V\setminus(A\cup B)\).  If \(e(A,F)\le2|A|\) and
\(e(B,F)\le2|B|\), then \(\ac(G)\le2\).
\end{proposition}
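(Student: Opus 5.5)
We plan to use the variational characterization \eqref{eq:variational} with a test vector that is constant on each cluster and zero on the separator. Put \(a=|A|\), \(b=|B|\), and define
\[
x_v=\begin{cases} b, & v\in A,\\ -a, & v\in B,\\ 0, & v\in F.\end{cases}
\]
Then \(\sum_v x_v=ab-ab=0\), and \(x\ne0\) because \(A\) and \(B\) are nonempty. Every coordinate of \(x\) on \(F\) is zero, so the norm is \(\sum_v x_v^2=ab^2+ba^2=ab(a+b)\).

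Next we compute the Dirichlet energy edge by edge. Edges inside \(A\), inside \(B\), or inside \(F\) join equal values and contribute nothing. By hypothesis there are no edges between \(A\) and \(B\). The only remaining edges join \(A\) to \(F\), each contributing \(b^2\), or join \(B\) to \(F\), each contributing \(a^2\). Hence
\[
\sum_{uv\in E}(x_u-x_v)^2=b^2e(A,F)+a^2e(B,F)\le 2ab^2+2a^2b=2ab(a+b),
\]
using \(e(A,F)\le2|A|\) and \(e(B,F)\le2|B|\). This energy is at most \(2\sum_v x_v^2\), and the observation following \eqref{eq:variational} then gives \(\ac(G)\le2\).

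The only real choice is the weights. If \(A\) and \(B\) received the values \(1/a\) and \(-1/b\), the hypotheses would not combine cleanly. The weights \(b\) and \(-a\) are forced by the zero-sum constraint together with the requirement that \(F\) be zero, and they make the energy bound exactly twice the norm. We do not need connectivity of \(G\), and \(F\) may be empty: in that case the energy is \(0\), which is still fine.
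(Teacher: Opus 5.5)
Your proof is correct and is exactly the paper's argument: the same test vector (\(b\) on \(A\), \(-a\) on \(B\), \(0\) on \(F\)), the same squared norm \(ab(a+b)\), and the same energy bound \(b^2e(A,F)+a^2e(B,F)\le2ab(a+b)\). One slip in your closing remark: assigning \(1/a\) on \(A\) and \(-1/b\) on \(B\) gives \((ab)^{-1}\) times your vector, so it has the same Rayleigh quotient and works equally well, since the weights are forced only up to scaling.
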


\begin{proof}
Write \(a=|A|\) and \(b=|B|\), and define
\[
  x_v=
  \begin{cases}
    b,  & v\in A,\\
    -a, & v\in B,\\
    0,  & v\in F.
  \end{cases}
\]
Then \(\sum_v x_v=0\) and \(\sum_v x_v^2=ab(a+b)\).  There are no edges
between \(A\) and \(B\), so the numerator is
\(b^2e(A,F)+a^2e(B,F)\).  By the two boundary assumptions, this is
\(\le2ab^2+2a^2b=2ab(a+b)\), twice the squared norm of \(x\), and
\eqref{eq:variational} gives \(\ac(G)\le2\).
\end{proof}

We next record the sparse-set argument used in the first local reduction.
For \(S\subseteq V\), put
\[
 I(S)=\sum_{v\in S}|N(v)\cap S|=2e(S),
 \qquad \varepsilon(S)=\sum_{v\in S}(d(v)-3).
\]

\begin{definition}[Two-sparse set]\label{def:two-sparse}
A nonempty set \(S\subseteq V\) is \emph{two-sparse} if
\(|N(v)\setminus S|\le2\) for every \(v\in S\).
\end{definition}

\begin{proposition}[Sparse-set principle]\label{prop:sparse-set-principle}
The following statements hold.
\begin{enumerate}[label=\textup{(\roman*)}]
  \item Two disjoint two-sparse sets with no edge between them certify
  \(\ac(G)\le2\).
  \item If \(\delta(G)\ge3\) and a vertex set \(S\) satisfies
  \(I(S)>2\varepsilon(S)\), then \(S\) contains a two-sparse set.
\end{enumerate}
\end{proposition}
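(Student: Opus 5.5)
For part (i), the plan is to invoke Proposition~\ref{lem:two-cluster} directly. Let \(A\) and \(B\) be the two disjoint two-sparse sets with no edge between them, and put \(F=V\setminus(A\cup B)\). Every edge from \(A\) to \(F\) leaves \(A\), so it is counted in \(|N(v)\setminus A|\) for its endpoint \(v\in A\). Hence
\[
  e(A,F)\le\sum_{v\in A}|N(v)\setminus A|\le 2|A|,
\]
and in the same way \(e(B,F)\le2|B|\). Both sets are nonempty by Definition~\ref{def:two-sparse}, so all hypotheses of Proposition~\ref{lem:two-cluster} hold and \(\ac(G)\le2\).

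For part (ii), I would use a peeling argument with the potential \(\Phi(S)=I(S)-2\varepsilon(S)\). This can be rewritten as a sum over the vertices of \(S\):
\[
  \Phi(S)=\sum_{v\in S}\bigl(|N(v)\cap S|-2d(v)+6\bigr)
  =\sum_{v\in S}\bigl(6-d(v)-|N(v)\setminus S|\bigr).
\]
Start from the given \(S\) with \(\Phi(S)>0\). If \(S\) is two-sparse, we are done. Otherwise choose \(v\in S\) with \(|N(v)\setminus S|\ge3\) and set \(S'=S\setminus\{v\}\). The claim to verify is \(\Phi(S')\ge\Phi(S)\).

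To see this, write \(k=|N(v)\cap S|\) and compute the change in each term. Using \(I(S')=I(S)-2k\) and \(\varepsilon(S')=\varepsilon(S)-(d(v)-3)\),
\[
  \Phi(S')-\Phi(S)=-2k+2(d(v)-3)=2\bigl(|N(v)\setminus S|-3\bigr)\ge0.
\]
So the potential never decreases under removal of a non-sparse vertex, and it stays strictly positive.

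The process terminates because \(|S|\) strictly decreases at each step. It cannot reach the empty set, since \(\Phi(\emptyset)=0\) while the potential remains positive throughout. The final set is therefore a nonempty two-sparse subset of the original \(S\).

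The main point to check is this monotonicity computation; once it is in place, everything else is bookkeeping. It is worth noting where \(\delta(G)\ge3\) enters. It is not needed for the monotonicity step itself. Its role is only to make \(\varepsilon\) a sum of nonnegative terms, which is what makes the hypothesis \(I(S)>2\varepsilon(S)\) meaningful. Positivity of \(\Phi\) together with \(\Phi(\emptyset)=0\) is what rules out an empty final set.
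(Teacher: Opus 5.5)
Your proof is correct and follows the paper's argument. Part (i) is the same direct application of Proposition~\ref{lem:two-cluster}, and in part (ii) your nondecreasing potential \(\Phi=I-2\varepsilon\) is the direct form of the paper's contradiction-plus-telescoping over the same peeling sequence, using the identical per-step bound \(|N(v)\cap T|\le d(v)-3\) (your remark that \(\delta(G)\ge3\) is never actually used holds for the paper's proof too).
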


\begin{proof}
For (i), let the two sets be \(S,T\), and put
\(F=V\setminus(S\cup T)\).  Two-sparsity gives
\(e(S,F)\le2|S|\) and \(e(T,F)\le2|T|\), so
Proposition~\ref{lem:two-cluster} applies.

For (ii), suppose otherwise and start with \(T_0=S\).  As long as \(T_i\) is
nonempty, choose \(v_i\in T_i\) with at least three neighbors outside
\(T_i\), and put \(T_{i+1}=T_i\setminus\{v_i\}\).  The process eventually
deletes every vertex of \(S\).  Removing \(v_i\) decreases the ordered
internal-edge count by \(2|N(v_i)\cap T_i|\le2(d(v_i)-3)\).  Telescoping over
the deletion sequence gives \(I(S)\le2\varepsilon(S)\), a contradiction.
\end{proof}

\subsection{Initial reductions and the degree budget}

We now begin the structural reduction.  The first certificate rules out low
degree, after which the fixed edge count supplies the degree budget used in
the remaining local arguments.

\begin{lemma}[Low-degree vertex]\label{lem:low-degree}
Let \(u\) be a vertex of degree \(\le2\).  If
\(T=V\setminus(\{u\}\cup N(u))\) is nonempty, then \(\ac(G)\le2\).
\end{lemma}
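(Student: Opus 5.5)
Apply the two-cluster certificate, Proposition~\ref{lem:two-cluster}, with \(A=\{u\}\) and \(B=T\). These sets are disjoint and nonempty. By definition \(T\) avoids \(u\) and every neighbor of \(u\), so no edge joins \(A\) to \(B\). The separating set is therefore \(F=V\setminus(A\cup B)=N(u)\).

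For the boundary condition at \(A\), note that \(e(A,F)=|N(u)|=d(u)\le2=2|A|\), so the \(A\)-side holds automatically.

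The remaining task, and the only real one, is the \(B\)-side: \(e(T,N(u))\le2|T|\). Each vertex of \(N(u)\) has at most \(n-1\) neighbors and is adjacent to \(u\). Hence each \(w\in N(u)\) sends at most \(|T|\) edges into \(T\), giving
\[
e(T,N(u))\le d(u)\,|T|\le 2|T|.
\]
So the \(B\)-side inequality also holds, and Proposition~\ref{lem:two-cluster} yields \(\ac(G)\le2\).

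The one step I expect to need care with is exactly this counting bound. The point is that \(e(T,F)\) counts edges between \(T\) and the at most two vertices of \(F=N(u)\). Each vertex of \(T\) can therefore contribute at most \(|F|\le2\) such edges, and no degree-budget information is needed. The nonemptiness of \(T\) is used only to make \(B\) a legitimate nonempty set, as required by Proposition~\ref{lem:two-cluster}.
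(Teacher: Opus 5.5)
Your proposal is correct and follows essentially the paper's argument. The paper writes out the test vector \(x_u=|T|\), \(x_v=-1\) on \(T\), \(x_v=0\) on \(N(u)\), which is exactly the vector of Proposition~\ref{lem:two-cluster} with \(A=\{u\}\), \(B=T\), \(F=N(u)\), and it bounds the two boundary terms just as you do, using \(d(u)\le2\) and the fact that each vertex of \(T\) has at most two neighbors in \(N(u)\).
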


\begin{proof}
Put \(t=|T|\), and set \(x_u=t\), \(x_v=-1\) for \(v\in T\), and
\(x_v=0\) for \(v\in N(u)\).  The vector has coordinate sum zero and
squared norm \(t^2+t\).  There are no edges from \(u\) to \(T\).  The
contribution from edges joining \(u\) to \(N(u)\) is \(\le2t^2\), and each
vertex of \(T\) has \(\le2\) neighbors in \(N(u)\), whose total contribution
is \(\le2t\).  Thus the Rayleigh numerator is \(\le2(t^2+t)\), and
\eqref{eq:variational} gives \(\ac(G)\le2\).
\end{proof}

The fixed edge count gives average degree \(4-8/n\), so degree \(4\) is
the natural reference level.  Once minimum degree \(3\) is established,
degree-\(3\) vertices are the only source of deficit below this level,
while vertices of degree at least \(5\) provide the compensating excess.
Indeed, the degree-sum identity gives
\(\sum_{v\in V}d(v)=2|E|=4n-8\), and hence
\begin{equation}\label{eq:total-excess}
  \sum_{v\in V}\bigl(d(v)-3\bigr)=n-8.
\end{equation}
When \(\delta(G)\ge3\), every summand is nonnegative.  Under this assumption,
we will repeatedly use the following boundary estimate.  Given a nonempty
set \(A\), put \(F=N(A)\setminus A\) and
\(B=V\setminus(A\cup F)\).  Every vertex of \(F\) has a neighbor in \(A\), so
\begin{equation}\label{eq:separator-ledger}
  e(B,F)
  \le \sum_{w\in F}(d(w)-1)
  =2|F|+\varepsilon(F)
  \le2|F|+n-8-\varepsilon(A).
\end{equation}

\begin{lemma}[Adjacent degree-\(3\) vertices]\label{lem:adjacent-three}
Assume that \(n\ge10\), that \(G\) has \(2(n-2)\) edges, and that
\(\delta(G)\ge3\).  If two degree-\(3\) vertices are adjacent, then
\(\ac(G)\le2\).
\end{lemma}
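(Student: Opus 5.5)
The natural certificate is Proposition~\ref{lem:two-cluster}, applied with \(A=\{u,v\}\), where \(u,v\) are the two adjacent degree-\(3\) vertices. Put \(F=N(A)\setminus A\) and \(B=V\setminus(A\cup F)\), exactly as in the setup of \eqref{eq:separator-ledger}. By construction no edge joins \(A\) and \(B\).

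The boundary at \(A\) is automatic. Each of \(u,v\) has one neighbor inside \(A\) and two outside, so \(e(A,F)=4=2|A|\) and \(|F|\le4\). Since \(|F|\le4\), we have \(|B|\ge n-6\ge4\), so \(B\) is nonempty.

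It remains to show \(e(B,F)\le2|B|\). Here I would sharpen \eqref{eq:separator-ledger} by counting endpoints in \(F\) exactly:
\[
e(B,F)=\sum_{w\in F}d(w)-e(A,F)-2e(F)\le 3|F|+\varepsilon(F)-4-2e(F).
\]
Using \eqref{eq:total-excess} with \(\varepsilon(A)=0\), we get \(\varepsilon(F)\le n-8-\varepsilon(B)\). Writing \(f=|F|\) and \(|B|=n-2-f\), the target inequality holds once
\[
3f+n-12-\varepsilon(B)-2e(F)\le 2n-4-2f,
\]
that is, once \(5f\le n+8+\varepsilon(B)+2e(F)\). For \(f\le3\) this holds for all \(n\ge10\). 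For \(f=4\) it holds when \(n\ge12\). Proposition~\ref{lem:two-cluster} then gives \(\ac(G)\le2\).

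The main obstacle is the residual case \(f=4\), \(n\in\{10,11\}\), \(e(F)=0\), with \(\varepsilon(B)\) small (at most \(1\) or \(2\)). In this case the excess \(n-8\in\{2,3\}\) is tiny, so almost all vertices have degree \(3\), and \(|B|\in\{4,5\}\). For \(n=10\), for example, all of \(B\) has degree \(3\), all excess sits in \(F\), and \(e(B,F)\in\{9,10\}\).

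My first attempt there is to look for another certificate inside this rigid configuration. Since \(e(B,F)>2|B|\) forces some \(b\in B\) with all three neighbors in \(F\), I would reapply Proposition~\ref{lem:two-cluster} with a modified pair. One option is \(A'=A\) and \(B'=B\setminus\{b\}\), moving \(b\) into the separator. Another is to take a second adjacent degree-\(3\) pair inside \(B\) (one exists when \(e(B)>0\) and \(B\) is all degree \(3\)) as the second cluster. Failing that, I would take two-sparse sets, namely \(A\) together with a suitable subset of \(B\), and invoke Proposition~\ref{prop:sparse-set-principle}(i).

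If no uniform choice works, the fallback is a finite case analysis over how the \(9\)–\(10\) edges between \(B\) and \(F\) distribute. This is small enough to enumerate, and for each pattern I would exhibit an explicit pair of clusters.
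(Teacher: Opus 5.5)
Your main case is sound. With \(A=\{u,v\}\), the identity \(e(B,F)=3f+n-12-\varepsilon(B)-2e(F)\) is exact, and Proposition~\ref{lem:two-cluster} applies whenever \(5f\le n+8+\varepsilon(B)+2e(F)\). The gap is the residual case \(f=4\), \(n\in\{10,11\}\), \(e(F)=0\), where \(\varepsilon(B)\le1\) for \(n=10\) and \(\varepsilon(B)=0\) for \(n=11\). You never close this case. You list candidate certificates and promise an enumeration, but none of the candidates is checked, and as stated none of them works in every residual subcase.

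Your claim that all of \(B\) has degree \(3\) when \(n=10\) is false. Your own value \(e(B,F)=9\) is exactly the subcase \(\varepsilon(B)=1\), and it occurs: for example, \(G[B]\) can be a path \(b_1wb_2\) with \(d(w)=4\) plus an isolated \(b_3\). Then no two degree-\(3\) vertices of \(B\) are adjacent, so the ``second adjacent degree-\(3\) pair'' option is unavailable. The other option also fails somewhere: moving a vertex \(b\) with \(N(b)\subseteq F\) into the separator does not work when \(n=10\) and \(\varepsilon(B)=0\). There \(e(B,F)=10\), and \(e(B\setminus\{b\},F\cup\{b\})=7>6=2|B\setminus\{b\}|\). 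In addition, \(e(B,F)>2|B|\) only gives a vertex of \(B\) with at least three neighbors in \(F\); that vertex need not have degree \(3\).

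The missing idea is Proposition~\ref{prop:sparse-set-principle}(ii) applied to \(B\), which is exactly the paper's argument. No edge joins \(A\) and \(B\), so counting degrees in \(B\) gives \(I(B)=3|B|+\varepsilon(B)-e(B,F)\). Your exact formula turns this into \(I(B)=2n+6-6f+2\varepsilon(B)+2e(F)\ge2\varepsilon(B)+2\) for every \(f\le4\) and \(n\ge10\). The paper uses the cruder bound \(e(B,F)\le2f+\varepsilon(F)\) and gets \(I(B)\ge2\varepsilon(B)+2n+2-5f\), which works equally well. Hence \(B\) contains a two-sparse set \(T\). Since \(A\) is two-sparse and \(T\subseteq B\) sends no edge to \(A\), part~(i) gives \(\ac(G)\le2\).

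This argument is uniform in \(f\) and \(n\), so it also makes your direct two-cluster case split unnecessary. Your third option (``\(A\) together with a suitable subset of \(B\)'') points in this direction. But the existence of that subset is exactly what the count \(I(B)>2\varepsilon(B)\) supplies, and that count is what your proposal leaves out.
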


\begin{proof}
Let \(A=\{u,v\}\), let \(F=N(A)\setminus A\), and put
\(B=V\setminus(A\cup F)\).  Then \(|F|\le4\), the set \(A\) is
two-sparse, and there is no edge from \(A\) to \(B\).  Write \(f=|F|\),
\(b=|B|=n-2-f\), and \(q=e(B,F)\).  Since \(n\ge10\) and \(f\le4\), we
have \(b\ge4\).  The first inequality in \eqref{eq:separator-ledger} gives
\(q\le2f+\varepsilon(F)\), while counting degrees in \(B\) gives
\(q+I(B)=3b+\varepsilon(B)\).
Since \(u\) and \(v\) have zero excess,
\(\varepsilon(F)+\varepsilon(B)=n-8\).  The two preceding bounds therefore
imply
\[
 I(B)\ge3b+\varepsilon(B)-\bigl(\varepsilon(F)+2f\bigr)
       =2\varepsilon(B)+2n+2-5f>2\varepsilon(B),
\]
where the last inequality uses \(n\ge10\) and \(f\le4\).
Proposition~\ref{prop:sparse-set-principle}(ii) supplies a two-sparse set
\(T\subseteq B\).  The sets \(A\) and \(T\) are separated, so part~(i)
completes the proof.
\end{proof}

Suppose that \(n\ge10\) and that \(G\) is a counterexample to
Conjecture~\ref{conj:kolokolnikov}.  If a vertex had degree \(\le2\),
then its closed neighborhood would contain \(\le3\) vertices, so
Lemma~\ref{lem:low-degree} would contradict \(\ac(G)>2\).  Hence
\(\delta(G)\ge3\), and Lemma~\ref{lem:adjacent-three} then implies that
the degree-\(3\) vertices form an independent set.  Thus every
counterexample in this range belongs to the following obstruction class.

\begin{definition}[Degree-\(3\)-separated obstruction]
\label{def:separated-obstruction}
A finite simple graph \(G\) on \(n\) vertices is called a
\emph{degree-\(3\)-separated obstruction} if it has \(2(n-2)\) edges,
\(\ac(G)>2\), \(\delta(G)\ge3\), and the set
\(D_3(G)=\{v\in V(G):d(v)=3\}\) is independent.
\end{definition}

\subsection{Further local certificates}

Having identified the obstruction class containing every possible
counterexample, we record three further configurations excluded from it.
The lemmas are stated under only the hypotheses they need.  Their proofs all
follow the same pattern: the local configuration forms \(A\), its external
neighborhood forms the separator \(F\), and the remaining vertices form
\(B\).  Local degree information controls \(e(A,F)\) and \(|F|\), while
\eqref{eq:separator-ledger} controls \(e(B,F)\); the two-cluster certificate
applies once \(e(A,F)\le2|A|\) and \(e(B,F)\le2|B|\).

\begin{lemma}[Star certificate]\label{lem:star}
Assume that \(G\) has \(2(n-2)\) edges and \(\delta(G)\ge3\).  Let \(z\)
have degree \(d\ge4\) and at least \(d-2\) distinct degree-\(3\)
  neighbors.  If
\(9d\le n+15\),
then \(\ac(G)\le2\).
\end{lemma}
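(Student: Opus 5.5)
The plan is to apply the two-cluster certificate (Proposition~\ref{lem:two-cluster}) with \(A\) consisting of \(z\) together with \(d-2\) of its degree-\(3\) neighbors, \(F=N(A)\setminus A\), and \(B=V\setminus(A\cup F)\). By construction there is no edge from \(A\) to \(B\). So it suffices to verify \(e(A,F)\le2|A|\) and \(e(B,F)\le2|B|\), with \(B\) nonempty.

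Concretely, fix a set \(S\) of exactly \(d-2\) distinct degree-\(3\) neighbors of \(z\), and put \(A=\{z\}\cup S\), so \(|A|=d-1\).

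For the \(A\)-side, \(z\) has \(d-(d-2)=2\) neighbors outside \(S\). Each \(s\in S\) spends one of its three edges on \(z\), so it has at most \(2\) edges leaving \(A\). Hence
\[
e(A,F)\le 2+2(d-2)=2(d-1)=2|A|,
\]
and edges inside \(S\) only improve this. The same count gives \(|F|\le 2d-2\). Moreover \(\varepsilon(A)=d-3\), since the vertices of \(S\) have zero excess.

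For the \(B\)-side, invoke the separator ledger \eqref{eq:separator-ledger}, which is valid because \(\delta(G)\ge3\):
\[
e(B,F)\le 2|F|+n-8-(d-3)=2|F|+n-5-d.
\]
Since \(|B|=n-(d-1)-|F|\), the requirement \(e(B,F)\le2|B|\) reduces to
\[
2|F|+n-5-d\le 2n-2d+2-2|F|,
\]
that is, \(4|F|\le n-d+7\). Using \(|F|\le2d-2\), this follows from \(8d-8\le n-d+7\), which is exactly the hypothesis \(9d\le n+15\).

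Finally check that \(B\ne\emptyset\). We have \(|B|\ge n-(d-1)-(2d-2)=n-3d+3\). From \(n\ge 9d-15\) and \(d\ge4\) this is at least \(6d-12>0\). \(A\) is nonempty trivially, so Proposition~\ref{lem:two-cluster} yields \(\ac(G)\le2\).

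The main point of care is the bookkeeping in the \(B\)-side inequality. \(|F|\) appears on both sides with opposite signs, so one must combine them before bounding \(|F|\) by \(2d-2\), rather than bounding \(e(B,F)\) and \(|B|\) separately. It is precisely this combination that produces the threshold \(9d\le n+15\). No other step looks delicate.
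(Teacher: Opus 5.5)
Your proof is correct and follows the paper's argument essentially step for step. You use the same set \(A=\{z\}\cup K\), the same bounds \(e(A,F)\le2|A|\) and \(|F|\le2d-2\), and the separator ledger \eqref{eq:separator-ledger} on the \(B\)-side, arriving at exactly the threshold \(9d\le n+15\). One small correction to your closing remark: combining before bounding \(|F|\) is not actually necessary. The paper bounds \(e(B,F)\le n+3d-9\) and \(|B|\ge n-3d+3\) separately and gets the same threshold, because both worst cases occur at \(|F|=2d-2\).
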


\begin{proof}
Choose \(d-2\) degree-\(3\) neighbors of \(z\), call their set \(K\), and
put \(A=\{z\}\cup K\).  Let \(F\) be the external neighborhood of \(A\)
and \(B=V\setminus(A\cup F)\).  Since \(|A|=d-1\), and both \(z\) and each
vertex of \(K\) have \(\le2\) edges leaving \(A\), we have
\(e(A,F)\le2|A|\) and \(|F|\le2d-2\).
The excess \(\varepsilon(A)\) is at least \(d-3\), so
\eqref{eq:separator-ledger} gives
\(e(B,F)\le2|F|+n-8-(d-3)\le n+3d-9\).  Moreover,
\(|B|\ge n-(d-1)-(2d-2)=n-3d+3\).  The hypothesis \(9d\le n+15\) is
equivalent to
\(n+3d-9\le2(n-3d+3)\), and it also ensures that \(B\) is nonempty.
Thus Proposition~\ref{lem:two-cluster} applies.
\end{proof}

\begin{lemma}[Adjacent high-degree vertices]\label{lem:decorated-edge}
Assume that \(G\) has \(2(n-2)\) edges and \(\delta(G)\ge3\).  Let \(u,v\)
be adjacent vertices of degrees \(d_u,d_v\ge4\), and put
\(D=d_u+d_v\).  Suppose that
\(K_u\subseteq N(u)\) and \(K_v\subseteq N(v)\) consist of degree-\(3\)
vertices, with \(|K_u|=d_u-3\) and \(|K_v|=d_v-3\).  Each of the
following conditions implies \(\ac(G)\le2\):
\begin{enumerate}[label=\textup{(\roman*)}]
  \item \(K_u\cap K_v=\varnothing\) and \(9D\le n+42\);
  \item \(|K_u\cap K_v|=1\) and \(9D\le n+56\).
\end{enumerate}
\end{lemma}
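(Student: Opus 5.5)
The plan follows the template announced before Lemma~\ref{lem:star}: take \(A=\{u,v\}\cup K_u\cup K_v\), let \(F=N(A)\setminus A\) and \(B=V\setminus(A\cup F)\), bound \(e(A,F)\) and \(|F|\) locally, and bound \(e(B,F)\) with \eqref{eq:separator-ledger}.

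\emph{Local counts.} Since \(v\in N(u)\) and \(|K_u|=d_u-3\), the vertex \(u\) has at most \(2\) edges leaving \(A\); the same holds for \(v\). Each vertex of \(K_u\cup K_v\) has degree \(3\) and at least one neighbor (\(u\) or \(v\)) inside \(A\), so it has at most \(2\) edges leaving \(A\). Hence \(e(A,F)\le 2|A|\). The excess satisfies \(\varepsilon(A)\ge (d_u-3)+(d_v-3)=D-6\).

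\emph{Case (i).} Here \(|A|=D-4\) and \(|F|\le 2\cdot 2+2(D-6)=2D-8\). Then \eqref{eq:separator-ledger} gives
\[
e(B,F)\le 2(2D-8)+n-8-(D-6)=n+3D-18,
\]
and \(|B|\ge n-(D-4)-(2D-8)=n-3D+12\). The inequality \(n+3D-18\le 2(n-3D+12)\) is exactly \(9D\le n+42\). The same hypothesis gives \(|B|\ge 6D-30>0\), because \(D\ge 8\). Proposition~\ref{lem:two-cluster} then applies.

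\emph{Case (ii).} Let \(w\) be the common vertex of \(K_u\cap K_v\). Then \(|A|=D-5\). The vertex \(w\) has both \(u\) and \(v\) inside \(A\), so it has at most one edge leaving \(A\). This gives \(e(A,F)\le 2|A|-1\) and \(|F|\le 4+1+2(D-7)=2D-9\). The ledger now yields
\[
e(B,F)\le n+3D-20, \qquad |B|\ge n-3D+14.
\]
Plugging these directly into Proposition~\ref{lem:two-cluster} only works when \(9D\le n+48\), which is weaker than claimed. This is the main obstacle.

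To close the gap I would not quote Proposition~\ref{lem:two-cluster}. Instead I would rerun its proof with the same vector: \(x=b\) on \(A\), \(x=-a\) on \(B\), \(x=0\) on \(F\), where \(a=|A|\) and \(b=|B|\). The unit of slack from \(w\) saves \(b^2\) in the numerator \(b^2e(A,F)+a^2e(B,F)\). So it suffices that
\[
a^2\bigl(e(B,F)-2b\bigr)\le b^2 .
\]
This is monotone in \(b\), so we may use the lower bound on \(|B|\). Under \(9D\le n+56\) the left factor satisfies
\[
e(B,F)-2b\le 9D-n-48\le 8 .
\]
It therefore remains to check \(b\ge\sqrt8\,a\).

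\emph{Checking the final inequality.}
\begin{itemize}
\item If \(9D-n-48\le0\), there is nothing to prove.
\item Otherwise \(n\le 9D-49\). Using \(n\ge 9D-56\), we get \(b\ge n-3D+14\ge 6D-42\). This is at least \(\sqrt8\,(D-5)\) once \(D\ge 9\).
\item The boundary case \(D=8\) (and any small leftover cases) I would handle by substituting the exact deficit \(24-n\) and \(b\ge n-10\) directly into \(a^2(e(B,F)-2b)\le b^2\).
\end{itemize}

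I expect this weighted refinement, and the bookkeeping of small \(D\) in case (ii), to be the only delicate part of the argument.
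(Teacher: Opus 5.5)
Your case (i) is exactly the paper's argument. In case (ii), however, the ``main obstacle'' comes from a miscount. Since \(|K_u\cup K_v|=D-7\), the degree-\(3\) vertices of \(A\) other than \(w\) number \(D-8\), not \(D-7\). The correct bound is therefore \(|F|\le 4+1+2(D-8)=2D-11\), not \(2D-9\).

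You can also read this off from your own bound: every vertex of \(F\) has a neighbor in \(A\), so \(|F|\le e(A,F)\le 2|A|-1=2D-11\). Using this together with \(\varepsilon(A)\ge D-6\), the ledger \eqref{eq:separator-ledger} gives \(e(B,F)\le n+3D-24\) and \(|B|\ge n-3D+16\). The comparison \(n+3D-24\le 2(n-3D+16)\) is exactly \(9D\le n+56\), and the hypothesis also gives \(|B|\ge 6D-40>0\). Proposition~\ref{lem:two-cluster} then applies directly, which is the paper's proof; no reweighting of the test vector is needed.

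As written, your workaround does not close the gap that the miscount creates. For \(D\ge9\) your estimate \(6D-42\ge\sqrt8\,(D-5)\) is fine. At \(D=8\) (so \(a=3\)), the check you propose is \(9(24-n)\le(n-10)^2\) for \(16\le n\le23\). It fails at \(n=16\) (\(72>36\)) and at \(n=17\) (\(63>49\)).

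Keeping \(f=|F|\) as a free variable with your bound \(f\le7\) does not help. Taking \(f=7\) gives deficit \(4f-n-4=24-n\) and \(b=n-10\), which are exactly these failing numbers. So your proposal leaves part (ii) unproved for \(D=8\), \(n\in\{16,17\}\). Those cases are recovered only by correcting the count of \(|F|\), and once that is done the refinement is no longer needed.
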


\begin{proof}
Let \(A=\{u,v\}\cup K_u\cup K_v\), let \(F\) be the external neighborhood
of \(A\), and put \(B=V\setminus(A\cup F)\).

In case (i), \(|A|=D-4\).  Each of \(u,v\), and each degree-\(3\) vertex
in \(K_u\cup K_v\), has \(\le2\) edges leaving \(A\).  Consequently,
\(e(A,F)\le2|A|\) and \(|F|\le2D-8\).  The excess of \(A\) is at least
\(D-6\), so \eqref{eq:separator-ledger} gives
\[
 e(B,F)\le n+3D-18,
 \qquad |B|\ge n-3D+12.
\]
The hypothesis \(9D\le n+42\) is exactly the comparison
\(n+3D-18\le2(n-3D+12)\).

In case (ii), \(|A|=D-5\).  The common degree-\(3\) vertex has two
neighbors in \(A\), and therefore contributes \(\le1\), rather than
two, to the external-edge budget.  Thus \(e(A,F)\le2|A|-1\) and
\(|F|\le2D-11\).  Since \(\varepsilon(A)\ge D-6\),
\eqref{eq:separator-ledger} gives
\[
 e(B,F)\le n+3D-24,
 \qquad |B|\ge n-3D+16.
\]
Here the hypothesis \(9D\le n+56\) is exactly the comparison
\(n+3D-24\le2(n-3D+16)\).  In both cases the numerical hypothesis also
makes the displayed lower bound for \(|B|\) positive.  Thus
\(e(B,F)\le2|B|\), and Proposition~\ref{lem:two-cluster} applies.
\end{proof}

\begin{lemma}[Cycle certificate]\label{lem:cycle-certificate}
Assume that \(G\) has \(2(n-2)\) edges and \(\delta(G)\ge3\).  Let \(C\)
be a cycle of length \(k\ge3\).  If
\(\sum_{v\in V(C)}d(v)\le4k\) and
\(3\sum_{v\in V(C)}(d(v)-1)\le n+8\),
then \(\ac(G)\le2\).
\end{lemma}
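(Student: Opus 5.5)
Following the pattern of Lemmas~\ref{lem:star} and~\ref{lem:decorated-edge}, I take \(A=V(C)\), let \(F=N(A)\setminus A\) be its external neighborhood, and put \(B=V\setminus(A\cup F)\).  The goal is to verify the two hypotheses of Proposition~\ref{lem:two-cluster}, namely \(e(A,F)\le2|A|\) and \(e(B,F)\le2|B|\), with \(|A|=k\).  I also need \(B\ne\varnothing\); this should come out of the same counting.

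For the \(A\)-side, each vertex \(v\) of \(C\) has at least two neighbors in \(A\) (its two cycle neighbors), possibly more if \(C\) has chords.  Hence
\[
 e(A,F)\le\sum_{v\in V(C)}(d(v)-2)=\sum_{v\in V(C)}d(v)-2k\le 2k
\]
by the first hypothesis, and the same count gives \(|F|\le\sum_{v\in V(C)}(d(v)-2)=:s\).  Set \(\Sigma=\sum_{v\in V(C)}d(v)\), so \(s=\Sigma-2k\).

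For the \(B\)-side, I will use \eqref{eq:separator-ledger}:
\[
 e(B,F)\le 2|F|+n-8-\varepsilon(A),
\]
where \(\varepsilon(A)=\Sigma-3k\).  This gives \(e(B,F)\le 2s+n-8-\Sigma+3k\).  On the other hand, \(|B|\ge n-k-s\).  It therefore suffices to check
\[
 2s+n-8-\Sigma+3k\le 2(n-k-s).
\]
Substituting \(s=\Sigma-2k\), this is \(3\Sigma-3k\le n+8\), i.e.\ \(3\sum_{v\in V(C)}(d(v)-1)\le n+8\), which is exactly the second hypothesis.

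Nonemptiness of \(B\) then follows.  If \(B\) were empty, the inequality would read \(e(B,F)=0\le 2|B|\) trivially, but Proposition~\ref{lem:two-cluster} needs \(B\) nonempty.  The bound \(n-k-s\ge\tfrac12(\text{RHS of ledger})\ge\tfrac12(n-8-\varepsilon(A))\) works, and \(\varepsilon(A)\le n-8\) by \eqref{eq:total-excess}.  This yields only \(\ge0\), so I will argue directly: \(k+s=\Sigma-k\le\tfrac13(n+8)+\dots\), and more simply \(k+s\le\Sigma-k=\sum(d(v)-1)\le(n+8)/3<n\) for \(n\ge5\).  Here \(n\ge5\) holds since \(\delta\ge3\) and \(2(n-2)\ge 3n/2\) force \(n\ge8\).

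The only delicate point is this positivity of \(|B|\) and the bookkeeping with chords; the chords only help, since they reduce both \(e(A,F)\) and \(|F|\).
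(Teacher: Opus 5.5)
Your proposal is correct and follows the paper's proof essentially step for step: the same choice of \(A=V(C)\), \(F=N(A)\setminus A\), \(B=V\setminus(A\cup F)\), the same chord-tolerant bounds \(e(A,F),\,|F|\le\sum_{v\in V(C)}d(v)-2k\), and the same application of \eqref{eq:separator-ledger}, which reduces \(e(B,F)\le2|B|\) exactly to the hypothesis \(3\sum_{v\in V(C)}(d(v)-1)\le n+8\). The only difference is how you show \(B\ne\varnothing\): you use \(|B|\ge n-\sum_{v\in V(C)}(d(v)-1)\ge(2n-8)/3>0\), with \(n\ge8\) coming from the degree count, whereas the paper combines \(\sum_{v\in V(C)}d(v)\le4k\) with \(n\ge6k-8\) to get \(|B|\ge3k-8>0\); both arguments are valid, and your abandoned remark about \(\tfrac12(n-8-\varepsilon(A))\) can simply be deleted.
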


\begin{proof}
Put \(A=V(C)\), let \(F\) be the external neighborhood of \(A\), set
\(B=V\setminus(A\cup F)\), and write
\(D_C=\sum_{v\in A}d(v)\).  Every cycle vertex already has two
neighbors in \(A\), and therefore \(e(A,F)\le D_C-2k\le2k=2|A|\) and
\(|F|\le D_C-2k\).
The excess of \(A\) is \(D_C-3k\), so \eqref{eq:separator-ledger} gives
\(e(B,F)\le n-8+D_C-k\).  Also \(|B|\ge n+k-D_C\).  The second
hypothesis says \(3(D_C-k)\le n+8\), which is exactly
\(n-8+D_C-k\le2(n+k-D_C)\).
Finally, \(D_C-k\ge2k\), so the same hypothesis implies
\(n\ge6k-8\).  Together with \(D_C\le4k\), this gives
\(|B|\ge n-3k\ge3k-8>0\).  The two-cluster certificate completes the
proof.
\end{proof}

In a degree-\(3\)-separated obstruction, the \(3|D_3(G)|\) edges leaving
\(D_3(G)\) all end at vertices of degree \(\ge4\).  The next section compares
their number with the total degree available at those vertices, producing a
global counting constraint and an edge-excess bound for the low-degree
induced subgraph.

\section{Census of a separated obstruction}\label{sec:census}

Let \(G\) be a degree-\(3\)-separated obstruction.  Write
\(D_3=D_3(G)\), set \(S=\{v\in V:d(v)\le4\}\), and
\(R=V\setminus S=\{v\in V:d(v)\ge5\}\).  We call the vertices in
\(R\) \emph{heavy}.  Write \(h=|R|\),
\(X=\sum_{v\in R}(d(v)-4)\), and
\(\tau(v)=|N(v)\cap D_3|\).  Thus \(X\) is the total degree excess above
\(4\) among the vertices of degree at least \(5\), while \(\tau(v)\)
counts the degree-\(3\) neighbors of \(v\).

It is useful to count the edges leaving \(D_3\) in two ways.  From the
\(D_3\) side, their number is fixed by \(|D_3|\); at their other endpoints,
they must be accommodated by vertices of degree at least \(4\).  The quantity
\(X\) plays two competing roles: it increases the number of degree-\(3\)
vertices, but it also allows high-degree vertices to have more neighbors in
\(D_3\).  The count compares the number of these edges with the available
degree capacity.

\subsection{Degree identities and local capacity}

Splitting the degree sum into degrees \(3\), \(4\), and at least \(5\) gives
\(4n-8=4n-|D_3|+X\).  Together with the independence of \(D_3\), this yields
\begin{align}
  |D_3|&=8+X,                                      \label{eq:d3-count}\\
  \sum_{v:\,d(v)\ge4}\tau(v)&=24+3X,             \label{eq:incidence-total}\\
  h&\le X,                                         \label{eq:h-le-X}\\
  |S|&=n-h,                                        \notag\\
  \sum_{v:\,d(v)\ge4}d(v)&=4n-32-3X.            \label{eq:high-degree-sum}
\end{align}
Indeed, the second identity counts the edges leaving \(D_3\) in two ways, the
third and fourth follow from the definitions of \(X,h,S\), and the last
subtracts the degree contribution \(3|D_3|=24+3X\) from \(4n-8\).

Equation~\eqref{eq:incidence-total} counts the edges leaving \(D_3\).  We
next bound the contribution of each vertex to this total.  For a vertex
of degree at least \(4\), the star certificate controls \(\tau(v)\)
whenever \(9d(v)\le n+15\).  We therefore call \(v\)
\emph{exceptional} if \(9d(v)>n+15\).

\begin{lemma}[Degree-\(3\)-neighbor cap and independence]\label{lem:incidence-cap}
Let \(G\) be a degree-\(3\)-separated obstruction of order \(n\ge48\).
\begin{enumerate}[label=\textup{(\roman*)}]
  \item For every \(v\in V\), if \(9d(v)\le n+15\), then
  \(\tau(v)\le d(v)-3\).
  \item A degree-\(4\) vertex has \(\le1\) neighbor in \(D_3\).
  \item The sets \(O=\{v:d(v)=4,\ \tau(v)=1\}\) and
  \(P=\{v:d(v)=5,\ \tau(v)=2\}\) have the property that \(O\cup P\) is
  independent.
\end{enumerate}
\end{lemma}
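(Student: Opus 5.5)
Part (i) follows from the star certificate by contradiction. Suppose \(9d(v)\le n+15\) and \(\tau(v)\ge d(v)-2\).
\begin{itemize}[label={}]
  \item If \(d(v)=3\), then \(\tau(v)=0\) because \(D_3\) is independent, so the claimed bound \(\tau(v)\le 0\) holds trivially.
  \item If \(d(v)\ge4\), then \(v\) has at least \(d(v)-2\) distinct degree-\(3\) neighbors. Lemma~\ref{lem:star} then gives \(\ac(G)\le2\), contradicting \(\ac(G)>2\).
\end{itemize}
Hence \(\tau(v)\le d(v)-3\).

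Part (ii) is the special case \(d(v)=4\) of (i). Its numerical condition is \(36\le n+15\), which holds since \(n\ge48\), and (i) gives \(\tau(v)\le1\).

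For part (iii), suppose two vertices \(u,v\in O\cup P\) are adjacent; both have degree \(\ge4\). For a vertex \(w\in O\cup P\) we have \(\tau(w)=d(w)-3\). So we may choose \(K_u=N(u)\cap D_3\) and \(K_v=N(v)\cap D_3\), with \(|K_u|=d_u-3\) and \(|K_v|=d_v-3\), exactly as Lemma~\ref{lem:decorated-edge} requires. Here \(D=d_u+d_v\in\{8,9,10\}\). Since \(|K_u|,|K_v|\le2\), the intersection \(K_u\cap K_v\) has size \(0\), \(1\) or \(2\).
\begin{itemize}[label={}]
  \item If \(|K_u\cap K_v|=0\), case (i) of Lemma~\ref{lem:decorated-edge} needs \(9D\le n+42\). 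The worst value is \(D=10\), giving \(90\le n+42\), i.e.\ \(n\ge48\), which is exactly the hypothesis.
  \item If \(|K_u\cap K_v|=1\), case (ii) needs \(9D\le n+56\). This holds comfortably since \(9D\le 90\le n+56\).
  \item If \(|K_u\cap K_v|=2\), both \(u\) and \(v\) lie in \(P\), and \(K_u=K_v=\{a,b\}\) with \(a,b\) of degree \(3\). This is the main obstacle, since Lemma~\ref{lem:decorated-edge} does not cover it.
\end{itemize}

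In this last case I would look for a short cycle and apply Lemma~\ref{lem:cycle-certificate}. A natural candidate is the triangle \(u,v,a\), which exists because \(a\) is adjacent to both \(u\) and \(v\) and \(uv\) is an edge. Its degree sum is \(5+5+3=13\le12\)? This fails, since \(13>12\). Instead take the \(4\)-cycle \(u,a,v,b\). Its degree sum is \(5+3+5+3=16\le16\), so the first hypothesis holds. For the second hypothesis, \(3\sum(d-1)=3\cdot12=36\le n+8\) holds because \(n\ge48\). Lemma~\ref{lem:cycle-certificate} then gives \(\ac(G)\le2\), a contradiction.

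This settles the shared-pair case. Alternatively, one could apply the two-cluster certificate directly with \(A=\{u,v,a,b\}\), which reproduces the same counts. In every case an edge inside \(O\cup P\) forces \(\ac(G)\le2\), contradicting \(\ac(G)>2\). Therefore \(O\cup P\) is independent.
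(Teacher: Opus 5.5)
Your proof is correct and is essentially the paper's: (i) and (ii) come from the star certificate exactly as in the paper, and in (iii) the disjoint case uses Lemma~\ref{lem:decorated-edge}(i), which is tight at \(D=10\), \(n\ge48\), while the doubly shared \(P\)--\(P\) case uses the \((5,3,5,3)\) four-cycle with Lemma~\ref{lem:cycle-certificate}. The only difference is that you organize (iii) by \(|K_u\cap K_v|\) and handle a single shared degree-\(3\) neighbor uniformly by Lemma~\ref{lem:decorated-edge}(ii), which needs only \(n\ge16,25,34\) for \(D=8,9,10\); the paper instead treats the \(O\)--\(O\) and \(O\)--\(P\) shared cases via the \((4,4,3)\) and \((4,5,3)\) triangles and Lemma~\ref{lem:cycle-certificate}, and both routes are valid.
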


\begin{proof}
If \(d(v)=3\), then \(\tau(v)=0\) because \(D_3\) is independent.  If a
vertex \(v\) of degree at least \(4\) satisfying \(9d(v)\le n+15\) had
at least \(d(v)-2\) degree-\(3\) neighbors, the star certificate would
contradict \(\ac(G)>2\).  This proves (i).  A degree-\(4\) vertex is
nonexceptional for \(n\ge48\), so (ii) follows.

It remains to prove (iii).  Suppose first that two vertices of \(O\) are
adjacent.  If they share their unique degree-\(3\) neighbor, they lie in a
triangle with degree pattern \((4,4,3)\), and
Lemma~\ref{lem:cycle-certificate} applies for \(n\ge16\).  If their degree-\(3\)
neighbors are distinct, Lemma~\ref{lem:decorated-edge}(i) applies with
degree sum \(8\), requiring \(n\ge30\).

Next suppose that a vertex of \(O\) is adjacent to a vertex of \(P\).
A shared degree-\(3\) neighbor produces a triangle with degree pattern
\((4,5,3)\), covered by Lemma~\ref{lem:cycle-certificate} for \(n\ge19\).
Otherwise Lemma~\ref{lem:decorated-edge}(i) applies with degree sum \(9\)
for \(n\ge39\).

Finally, suppose that two vertices of \(P\) are adjacent.  Their sets of
degree-\(3\) neighbors intersect in zero, one, or two vertices.  The first
two cases are covered by parts (i) and (ii), respectively, of
Lemma~\ref{lem:decorated-edge}, with degree sum \(10\); the corresponding
requirements are \(n\ge48\) and \(n\ge34\).  In the last case,
the two common degree-\(3\) neighbors and the two degree-\(5\) vertices
form a \(4\)-cycle of degree pattern \((5,3,5,3)\), and
Lemma~\ref{lem:cycle-certificate} applies for \(n\ge28\).  All these
requirements hold when \(n\ge48\), so every possible adjacency in
\(O\cup P\) contradicts \(\ac(G)>2\).
\end{proof}

For any degree-\(3\)-separated obstruction, define
\begin{align*}
  \tau_4&=\sum_{v:\,d(v)=4}\tau(v),&
  p&=\bigl|\{v:d(v)=5,\ \tau(v)=2\}\bigr|,\\
  \rho&=\bigl|\{v:d(v)\ge6,\ 9d(v)\le n+15\}\bigr|,&
  g&=\bigl|\{v:9d(v)>n+15\}\bigr|.
\end{align*}
Here \(\tau_4\) records all incidences at degree \(4\).  Relative to the
baseline \(d(v)-4\), a vertex counted by \(p\) or \(\rho\) may contribute
one additional incidence, while the trivial bound for an exceptional
vertex may contribute four.

For an obstruction of order \(n\ge32\), two elementary estimates will be
used in both range arguments of Section~\ref{sec:obstruction-closure}.  A
nonexceptional vertex contributes
\(\le d(v)-3\) to the incidence total, by the star certificate, whereas the
trivial bound for an exceptional vertex costs three additional units.  Thus
\begin{align}
  24+3X
  &\le\sum_{v:\,d(v)\ge4}(d(v)-3)+3g=n-8+3g,\notag\\
  3X+32&\le n+3g.                    \label{eq:exceptional-hoarding}
\end{align}
Every exceptional vertex satisfies \(n-20\le9(d(v)-4)\) and, since
\(n\ge32\), has degree at least \(6\).  Therefore
\begin{equation}\label{eq:exceptional-giants}
  g(n-20)
  \le9\sum_{v:\,9d(v)>n+15}(d(v)-4)
  \le9X.
\end{equation}

\subsection{Global inequalities and low-degree density}

We first derive the global census inequality needed by the exact Moore
argument.  Let the obstruction have order \(n\ge48\).  Regard \(d(v)-4\) as
the baseline incidence contribution of each \(v\in R\).  The contribution
beyond baseline is \(\le1\) for each vertex counted by \(p\) or \(\rho\),
and \(\le4\) for each vertex counted by \(g\).  Thus
\(24+3X\le X+\tau_4+p+\rho+4g\), and hence
\begin{equation}\label{eq:slots}
  24+2X\le\tau_4+p+\rho+4g.
\end{equation}
On the other hand, Lemma~\ref{lem:incidence-cap}(ii) gives
\(|O|=\tau_4\), while \(|P|=p\) by definition.  The independence of
\(O\cup P\) forces its members to send \(3\tau_4+3p\) edges to vertices of
degree at least \(4\) outside the set.  Consequently,
\(\sum_{v:\,d(v)\ge4}d(v)
\ge(4\tau_4+5p)+(3\tau_4+3p)=7\tau_4+8p\).
Together with \eqref{eq:high-degree-sum}, this gives
\begin{equation}\label{eq:capacity}
  7\tau_4+8p+3X+32\le4n.
\end{equation}

The auxiliary counts can now be eliminated to obtain the numerical
constraint used by the Moore argument.

\begin{lemma}[Moore-range census]\label{lem:moore-census}
Let \(G\) be a degree-\(3\)-separated obstruction of order \(n\ge48\).  Then
\begin{equation}\label{eq:moore-census}
  10X+7h+186\le4n.
\end{equation}
\end{lemma}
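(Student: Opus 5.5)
The inequality \eqref{eq:moore-census} should follow from eliminating \(\tau_4\) between the slot inequality \eqref{eq:slots} and the capacity inequality \eqref{eq:capacity}, and then comparing the leftover terms with \(X-h\). From \eqref{eq:slots} we get \(\tau_4\ge 24+2X-p-\rho-4g\). Since \(\tau_4\) enters \eqref{eq:capacity} with a positive coefficient, substitution gives
\[
 17X+200+p-7\rho-28g\le 4n .
\]
It therefore suffices to show
\[
 7(X-h)\ge 7\rho+28g-p-14 .
\]
Because \(p\ge0\), it is enough to prove \(7(X-h)\ge 7\rho+28g-14\).

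The second step bounds \(X-h=\sum_{v\in R}(d(v)-5)\) from below. Every summand is nonnegative. Each vertex counted by \(\rho\) has degree at least \(6\), so it contributes at least \(1\). These vertices are nonexceptional, so they are disjoint from the \(g\) exceptional vertices. An exceptional vertex satisfies \(9d(v)>n+15\ge63\), so \(d(v)\ge8\) and it contributes at least \(3\). Together these give \(X-h\ge\rho+3g\). The required inequality then reduces to \(21g\ge28g-14\), that is, \(g\le2\). When this fails, we need a better bound on exceptional degrees.

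I expect this case split on \(g\) to be the main obstacle. For \(n\ge57\) we have \(9d(v)>72\), so every exceptional vertex has degree at least \(9\) and contributes at least \(4\). Then \(X-h\ge\rho+4g\), and the inequality holds with slack \(14\). For \(48\le n\le56\) we instead show \(g\le2\), using the two estimates \eqref{eq:exceptional-hoarding} and \eqref{eq:exceptional-giants}:
\begin{itemize}
 \item \eqref{eq:exceptional-hoarding} gives \(3X\le n+3g-32\);
 \item substituting this into \eqref{eq:exceptional-giants} gives \(g(n-20)\le 3(n+3g-32)\), that is, \(g(n-29)\le 3n-96\).
\end{itemize}
For \(48\le n\le56\) this yields \(g\le(3n-96)/(n-29)<3\). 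For example, \(n=56\) gives \(72/27<3\), and the ratio is smaller for smaller \(n\) in this range. Hence \(g\le2\), and the previous paragraph finishes the argument.

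The only things left are routine numerical checks at the boundary orders: \(n=48\) gives \(d\ge8\), \(n=57\) gives \(d\ge9\), and the bound on \(g\) must be verified across \(48\le n\le56\).
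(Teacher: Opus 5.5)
Your proposal is correct and follows the paper's proof essentially step for step. You eliminate \(\tau_4\) between \eqref{eq:slots} and \eqref{eq:capacity}, use the ledger \(X-h\ge\rho+3g\), and bound \(g\le2\) via \eqref{eq:exceptional-hoarding} and \eqref{eq:exceptional-giants}. The only difference is that your case split at \(n=57\) is unnecessary. Since \((3n-96)/(n-29)=3-9/(n-29)<3\) for every \(n>29\), you get \(g\le2\) uniformly for all \(n\ge48\), which is how the paper argues.
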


\begin{proof}
If \(g\ge3\), inequalities \eqref{eq:exceptional-hoarding} and
\eqref{eq:exceptional-giants} imply
\((g-3)n\le29g-96\).  But for \(g\ge3\),
\(48(g-3)-(29g-96)=19g-48>0\), contradicting \(n\ge48\).  Thus \(g\le2\).

Every heavy vertex contributes at least one unit to \(X\), every vertex
counted by \(\rho\) contributes one further unit, and every exceptional
vertex contributes three further units because it has degree at least
\(8\).  Since the sets counted by \(\rho\) and \(g\) are disjoint,
\begin{equation}\label{eq:heavy-ledger}
  h+\rho+3g\le X.
\end{equation}
Equations \eqref{eq:slots} and \eqref{eq:capacity} give, after multiplying the
former by \(7\) and comparing the resulting \(7\tau_4+7p\) term with the latter,
\(17X+p\le4n-200+7\rho+28g\).  Together with
\eqref{eq:heavy-ledger}, this yields
\[
  10X+7h\le17X-7\rho-21g
  \le4n-200+7g-p\le4n-186,
\]
where the three comparisons use, respectively, \eqref{eq:heavy-ledger}, the
preceding bound on \(17X+p\), and \(g\le2\) with \(p\ge0\).
\end{proof}

The second output of the global count concerns the subgraph induced by
\(S\).  The next calculation expresses its edge excess in terms of the
degree variables \(X\) and \(h\).

\begin{lemma}[Low-degree edge excess]\label{lem:S-density}
For a degree-\(3\)-separated obstruction, define \(t=n-4-X-3h\).
Then
\begin{equation}\label{eq:S-density}
  e(S)\ge|S|+t.
\end{equation}
\end{lemma}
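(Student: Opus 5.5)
We prove the bound by a direct edge count that splits the edges of \(G\) according to how many endpoints lie in \(S\). Write \(e(R)\) for the number of edges inside \(R\) and \(e(S,R)\) for the number of edges between \(S\) and \(R\). Then
\[
 |E|=2(n-2)=e(S)+e(S,R)+e(R).
\]
The plan is to bound \(e(S,R)+e(R)\) from above by the total degree of the heavy vertices, and then read off a lower bound for \(e(S)\).

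The total degree of \(R\) is \(\sum_{v\in R}d(v)=4h+X\) by the definition of \(X\). Every edge counted in \(e(S,R)\) contributes one unit to this sum, and every edge counted in \(e(R)\) contributes two units. Hence
\[
 e(S,R)+e(R)\le e(S,R)+2e(R)=4h+X.
\]
It follows that \(e(S)\ge 2n-4-4h-X\). Since \(|S|=n-h\), this bound reads
\[
 e(S)\ge |S|+(n-4-X-3h)=|S|+t,
\]
which is exactly \eqref{eq:S-density}.

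The computation needs only the edge count \(2(n-2)\) and the definitions of \(S\), \(R\), \(h\) and \(X\). In particular, it does not use \(\ac(G)>2\), \(\delta(G)\ge3\), or the independence of \(D_3\). The step most likely to go wrong is the weighting of the edges inside \(R\). Bounding \(e(S,R)+e(R)\) by the full degree sum of \(R\), rather than trying to estimate \(e(R)\) separately, is what makes the argument work: discarding the \(e(R)\) term on the left only weakens the inequality in the direction we need.
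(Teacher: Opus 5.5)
Your proof is correct and is essentially the paper's argument. The paper writes \(2e(S)=\sum_{v\in S}d(v)-e(S,R)\) and bounds \(e(S,R)\) by the total degree \(4h+X\) of \(R\); this drops the same \(e(R)\) term as your bound \(e(S,R)+e(R)\le e(S,R)+2e(R)=4h+X\), phrased through degree sums rather than an edge partition, and your remark that only the edge count and the definitions of \(S,R,h,X\) are needed is accurate.
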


\begin{proof}
The total degree of \(R=V\setminus S\) is \(4h+X\).  Hence
\(e(S,R)\le4h+X\).  Since the total degree
sum is \(4n-8\),
\[
\begin{aligned}
  2e(S)
  &=
  \sum_{v\in S}d(v)-e(S,R)\\
  &\ge
  4n-8-2(4h+X)\\
  &=
  2(n-h)+2(n-4-X-3h)\\
  &=
  2|S|+2t.
\end{aligned}
\]
Dividing by \(2\) proves the result.
\end{proof}

The census has therefore reduced the large-order argument to a tension between
density and girth: \eqref{eq:S-density} supplies an edge surplus in \(G[S]\),
while \eqref{eq:moore-census} and \(h\le X\) control its parameters.  The next
section turns this surplus into a short cycle by an exact Moore bound and then
uses the spectral certificates to exclude a cycle of the same length.

\section{Exact Moore bounds and cycle exclusion}\label{sec:cycles}

We now formulate the two sides of this density--girth tension for the subgraph
induced by the vertices of degree \(\le4\).  An exact Moore bound converts its
edge excess into a short cycle, whereas the spectral certificates of
Section~\ref{sec:certificates} exclude every cycle in the same length range.

\subsection{Exact non-backtracking growth}

\begin{proposition}[Non-backtracking Moore inequality]\label{lem:nb-moore}
Let \(H\) be a finite simple graph with \(N\) vertices and \(\delta(H)\ge2\).
Write \(d_H(x)\) for the degree of \(x\) in \(H\), let
\(D_H=\sum_{x\in V(H)}d_H(x)\) be its degree sum, and let \(\ell\ge1\) be an
integer.  If \(H\) has no cycle of length \(\le2\ell\), then
\[
 D_H\sum_{i=0}^{\ell-1}(D_H-N)^iN^{\ell-1-i}\le N^\ell(N-1).
\]
\end{proposition}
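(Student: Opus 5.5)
The plan is to prove this as the Alon--Hoory--Linial form of the Moore bound, stated in exact integer form. Put \(\bar d=D_H/N\). Dividing both sides by \(N^\ell\) shows that the claim is equivalent to
\[
  \bar d\sum_{i=0}^{\ell-1}(\bar d-1)^i\le N-1,
\]
because \((D_H-N)/N=\bar d-1\). We will compare upper and lower bounds on the total number \(W_i\) of non-backtracking walks of length \(i\) in \(H\), for \(1\le i\le\ell\). Such a walk is a sequence \(x_0,x_1,\dots,x_i\) with \(x_{j}x_{j+1}\in E(H)\) and \(x_{j+1}\ne x_{j-1}\).

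\emph{Upper bound from the girth hypothesis.} Fix a start vertex \(v\). We claim that distinct non-backtracking walks from \(v\) of lengths between \(1\) and \(\ell\) end at distinct vertices, none equal to \(v\). First, such a walk cannot revisit a vertex: a revisit would close a cycle of length \(\le\ell\le 2\ell\). Next, suppose two distinct walks from \(v\) ended at the same vertex. Cut them at their last common prefix vertex. The two remaining tails are internally vertex-disjoint paths, because any earlier meeting would yield a shorter cycle. They leave the branch vertex along different edges, since one walk is not a prefix of the other; if it were, the longer walk would revisit its endpoint. Their union is therefore a cycle of length \(\le2\ell\), a contradiction. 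Hence \(\sum_{i=1}^{\ell}W_i\le N(N-1)\).

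\emph{Lower bound via stationarity and Jensen.} Choose a directed edge \((x_0,x_1)\) uniformly among the \(D_H\) directed edges. Extend the walk step by step by a uniformly random neighbor of the current vertex other than the previous one; this is possible since \(\delta(H)\ge2\). A given walk of length \(i\) has probability \(1/\bigl(D_H\prod_{j=1}^{i-1}(d_H(x_j)-1)\bigr)\), so
\[
  W_i=\mathbb{E}\Bigl[D_H\prod_{j=1}^{i-1}(d_H(x_j)-1)\Bigr].
\]
The uniform distribution on directed edges is stationary for this chain. Hence each \(x_j\) is distributed as \(\Pr(x_j=u)=d_H(u)/D_H\). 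By Jensen's inequality for \(\exp\),
\[
  W_i\ge D_H\exp\Bigl(\sum_{j=1}^{i-1}\mathbb{E}\log(d_H(x_j)-1)\Bigr),
  \qquad
  \mathbb{E}\log(d_H(x_j)-1)=\frac1{D_H}\sum_{u}f(d_H(u)),
\]
where \(f(d)=d\log(d-1)\). On \([2,\infty)\) we have \(f''(d)=(d-2)/(d-1)^2\ge0\), so \(f\) is convex there. Jensen over a uniform vertex then gives \(\sum_u f(d_H(u))\ge N f(\bar d)\). Therefore \(\mathbb{E}\log(d_H(x_j)-1)\ge\log(\bar d-1)\), and \(W_i\ge D_H(\bar d-1)^{i-1}\). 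The case \(\bar d=2\) is consistent, since the bound then reads \(W_i\ge D_H\).

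Summing over \(i=1,\dots,\ell\) and combining with the upper bound gives \(D_H\sum_{i=0}^{\ell-1}(\bar d-1)^i\le N(N-1)\). Multiplying by \(N^{\ell-1}\) yields the stated inequality. The main obstacle is the lower bound on \(W_i\), specifically justifying the stationarity of the non-backtracking chain. This follows from a direct check: each directed edge \((u,w)\) is entered from exactly \(d_H(u)-1\) directed edges \((y,u)\) with \(y\ne w\), each with transition probability \(1/(d_H(u)-1)\).
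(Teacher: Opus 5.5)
Your proof is correct, and it differs from the paper's mainly in where the work is done.

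The paper's proof is a citation. It invokes the irregular Moore bound of Alon, Hoory, and Linial, \(N\ge1+\bar d\sum_{i=0}^{\ell-1}(\bar d-1)^i\), and then performs the same rescaling by \(N^\ell\) that you do. You instead reprove that bound, essentially by the Alon--Hoory--Linial argument itself:
\begin{enumerate}[label=\textup{(\arabic*)}]
  \item the uniform measure on directed edges is stationary for the non-backtracking walk;
  \item Jensen's inequality for \(\exp\) bounds \(W_i\) below in terms of the averages \(\mathbb{E}\log(d_H(x_j)-1)\);
  \item convexity of \(d\log(d-1)\) on \([2,\infty)\), which is exactly where \(\delta(H)\ge2\) is used, gives \(W_i\ge D_H(\bar d-1)^{i-1}\).
\end{enumerate}

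The paper's route is two lines, but it depends on an external theorem phrased in terms of the exact girth. That is why it needs the remark about odd versus even girth. Your route works directly from the hypothesis that there is no cycle of length \(\le2\ell\). That hypothesis is precisely what makes the endpoint map on non-backtracking walks of length \(1\) to \(\ell\) from a fixed vertex injective and keeps it away from the start vertex. So no parity distinction arises, and the argument is self-contained, in line with the paper's stated aim.

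Two steps of the injectivity argument deserve a line more when written out.
\begin{itemize}
  \item For a revisit, take the first index at which the walk returns to an earlier vertex. Then the intermediate vertices are distinct, and non-backtracking rules out a closed piece of length \(2\).
  \item Rather than asserting that the two tails are internally disjoint, take the first vertex of one tail that lies on the other. This closes a cycle of length at least \(3\), because the first edges after the branch vertex differ, and at most \(2\ell\).
\end{itemize}
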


\begin{proof}
Put \(\bar d=D_H/N\).  Since the girth of \(H\) is at least
\(2\ell+1\), the irregular Moore bound of Alon, Hoory, and
Linial~\cite{ahl2002} gives
\[
 N\ge 1+\bar d\sum_{i=0}^{\ell-1}(\bar d-1)^i.
\]
Here the odd-girth bound applies directly when the girth is odd; when it is
even, the corresponding even-girth bound is stronger.  Substituting
\(\bar d=D_H/N\), subtracting \(1\), and multiplying by \(N^\ell\) gives the
stated inequality.
\end{proof}

To apply Proposition~\ref{lem:nb-moore} to an induced subgraph with \(v\)
vertices and edge surplus at least \(t\), we use the numerical condition
\begin{equation}\label{eq:ball-condition}
 t(v-1)v^\ell<(v+t)\bigl((v+2t)^\ell-v^\ell\bigr).
\end{equation}
The next proposition shows that this condition survives passage to the
possibly smaller \(2\)-core and forces a short cycle.

\begin{proposition}[Induced-set Moore criterion]\label{prop:nb-induced-cycle}
Let \(G\) be a finite simple graph and let \(U\subseteq V(G)\) be nonempty.
Put \(v=|U|\), and suppose \(e(U)\ge v+t\) for an integer \(t\ge1\).  Let \(L\ge6\) and
\(\ell=\lfloor L/2\rfloor\).  If \eqref{eq:ball-condition} holds, then
\(G[U]\) contains a cycle of length \(\le L\).
\end{proposition}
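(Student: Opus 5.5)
Argue by contradiction: assume \(G[U]\) has no cycle of length \(\le L\). Since \(L\ge 2\ell\), the graph \(G[U]\) then has no cycle of length \(\le2\ell\). The plan is to pass to the \(2\)-core \(H\) of \(G[U]\), obtained by repeatedly deleting vertices of degree \(\le1\). Each deletion removes one vertex and at most one edge, so the surplus \(e-|V|\) never decreases. Hence \(H\) has \(N\le v\) vertices and \(M\ge N+t\) edges. Since \(t\ge1\), some cycle exists, so \(H\) is nonempty with \(\delta(H)\ge2\), and \(H\) inherits the absence of cycles of length \(\le 2\ell\).

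Next apply Proposition~\ref{lem:nb-moore} to \(H\), with \(D_H=2M\ge 2N+2t\). It is cleaner to use the averaged form \(N\ge 1+\bar d\sum_{i<\ell}(\bar d-1)^i\). The right side is increasing in \(\bar d\ge 2\), so we may lower \(\bar d\) to \(2+2t/N\). Summing the geometric series, with ratio \(r=1+2t/N\), gives
\[
N-1\ \ge\ \frac{2(N+t)}{N}\cdot\frac{(N+2t)^\ell-N^\ell}{N^{\ell-1}\cdot 2t},
\qquad\text{i.e.}\qquad t(N-1)N^\ell\ \ge\ (N+t)\bigl((N+2t)^\ell-N^\ell\bigr).
\]
This is exactly the negation of \eqref{eq:ball-condition} with \(v\) replaced by \(N\). (Should the surplus in \(H\) exceed \(t\), first invoke monotonicity in \(\bar d\) to reduce to surplus exactly \(t\).)

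The remaining, and main, step is monotonicity in the vertex count. The aim is to show that if \eqref{eq:ball-condition} holds at \(v\), then it holds at every \(N\) with \(1\le N\le v\) (for fixed \(t,\ell\)); that contradicts the display above. Put \(\phi(N)=(N+t)\bigl((N+2t)^\ell-N^\ell\bigr)/N^\ell - t(N-1)\). Expanding the binomial gives
\[
\phi(N)=(N+t)\sum_{j=1}^{\ell}\binom{\ell}{j}(2t)^jN^{-j}-t(N-1).
\]
The \(j=1\) term contributes \(2\ell t(N+t)/N\). Since \(\ell\ge3\), this dominates the linear term \(tN\) only for small \(N\), so monotonicity is not free. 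I would show \(\phi\) is decreasing in \(N\): differentiate termwise, noting that \(\frac{d}{dN}\bigl[(N+t)N^{-j}\bigr]=N^{-j-1}\bigl((1-j)N-jt\bigr)\le0\) for \(j\ge1\), and the \(-t(N-1)\) term has derivative \(-t<0\). Hence \(\phi\) is strictly decreasing, so \(\phi(v)>0\) implies \(\phi(N)>0\) for \(N\le v\), giving the contradiction.

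A secondary point to check is the reduction from surplus \(>t\) down to \(t\) inside \(\phi\). Alternatively, note that the Moore right side increases with \(\bar d\), which is handled by using \(\bar d\ge2+2t/N\) before summing.
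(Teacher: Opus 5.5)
Your proposal is correct and follows essentially the same route as the paper: pass to the \(2\)-core (the surplus \(e-|V|\) is preserved), show that \eqref{eq:ball-condition} persists from \(v\) down to the smaller vertex count, and apply the Moore bound with \(D_H\ge 2(N+t)\) and the geometric-sum identity to obtain the reverse inequality. The only difference is cosmetic. You get monotonicity in \(N\) by differentiating the binomial expansion, whereas the paper rewrites the condition as \(v-1<2(1+t/v)\sum_{i<\ell}(1+2t/v)^i\) and reads the monotonicity off directly; this is the same fact, since your \(\phi(N)\) is exactly \(t\) times the difference of those two sides.
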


\begin{proof}
Repeatedly delete vertices of degree \(\le1\) from \(G[U]\), and let
\(H\) be the resulting \(2\)-core.  Each deletion removes one vertex and
\(\le1\) edge, so \(e-|V|\) does not decrease.  Since initially
\(e(U)-v\ge t>0\), the process cannot delete every vertex.  Thus \(H\) is
nonempty and, if \(v'=|V(H)|\), then \(e(H)\ge v'+t\).  Its total degree
\(D\) therefore satisfies \(D\ge2(v'+t)\) and \(D-v'\ge v'+2t\).

After division by \(tv^\ell\), condition \eqref{eq:ball-condition} is
equivalent to
\[
 v-1<2\left(1+\frac tv\right)
 \sum_{i=0}^{\ell-1}\left(1+\frac{2t}{v}\right)^i.
\]
The left-hand side decreases and the right-hand side increases as \(v\)
decreases, because both \(t/v\) and \(2t/v\) increase.  Since \(v'\le v\),
the same strict inequality holds with \(v'\) in place of \(v\).

Suppose that \(H\) has no cycle of length \(\le L\).  Since
\(2\ell\le L\), Proposition~\ref{lem:nb-moore} and the preceding degree
bounds give, term by term,
\[
 2(v'+t)\sum_{i=0}^{\ell-1}(v'+2t)^i(v')^{\ell-1-i}
 \le (v')^\ell(v'-1).
\]
Multiplying by \(t\) and using
\[
 2t\sum_{i=0}^{\ell-1}(v'+2t)^i(v')^{\ell-1-i}
 = (v'+2t)^\ell-(v')^\ell
\]
gives
\[
 (v'+t)\bigl((v'+2t)^\ell-(v')^\ell\bigr)
 \le t(v'-1)(v')^\ell,
\]
the reverse of \eqref{eq:ball-condition} for \(v'\), a contradiction.
\end{proof}

\subsection{Spectral cycle exclusion}

The Moore criterion produces a cycle of length \(\le L\) in \(G[S]\).  The
next proposition shows that the spectral certificates forbid such a cycle
whenever the census leaves enough boundary capacity.

\begin{proposition}[Short-cycle exclusion criterion]
\label{lem:cycle-exclusion}
Let \(G\) be a degree-\(3\)-separated obstruction, and let \(L\ge3\) be an
integer.  If \(12L+X\le2n\), then \(G[S]\) contains no cycle of length
\(\le L\).
\end{proposition}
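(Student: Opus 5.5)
The plan is to argue by contradiction and apply the two-cluster certificate, Proposition~\ref{lem:two-cluster}, directly. Suppose \(G[S]\) contains a cycle \(C\) of length \(k\) with \(3\le k\le L\). We will not use Lemma~\ref{lem:cycle-certificate} as stated: its ledger \eqref{eq:separator-ledger} spends the whole global excess \(n-8\). That only yields the requirement \(9k\le n+8\), which does not follow from \(12L+X\le2n\). Instead we redo the boundary count with a sharper bound on \(e(B,F)\) that charges excess only to heavy vertices, measured by \(X\).

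Put \(A=V(C)\), let \(F=N(A)\setminus A\) be the external neighborhood, and set \(B=V\setminus(A\cup F)\).

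\begin{enumerate}[label=\textup{(\arabic*)}]
  \item \emph{Boundary of \(A\).} Every vertex of \(C\) lies in \(S\), so it has degree \(\le4\). It also already has two neighbors on \(C\). Hence each cycle vertex sends at most \(2\) edges out of \(A\). This gives \(e(A,F)\le 2k=2|A|\) and \(|F|\le 2k\).
  \item \emph{Boundary of \(B\).} Every \(w\in F\) has a neighbor in \(A\), so \(e(B,F)\le\sum_{w\in F}(d(w)-1)\). Write \(d(w)-1=3+(d(w)-4)\). The term \(d(w)-4\) is \(\le 0\) unless \(w\in R\), and summing the positive parts over \(F\cap R\) gives at most \(X\). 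Therefore \(e(B,F)\le 3|F|+X\).
  \item \emph{Size of \(B\).} We have \(|B|\ge n-k-|F|\).
  \item \emph{Comparison.} The certificate needs \(e(B,F)\le2|B|\), and it suffices that \(3|F|+X\le 2(n-k-|F|)\), that is, \(5|F|+2k+X\le 2n\). Using \(|F|\le 2k\) and \(k\le L\), this follows from \(12k+X\le 12L+X\le 2n\).
  \item \emph{Nonemptiness.} The same hypothesis gives \(n\ge 6L\). Hence \(|B|\ge n-3k\ge 3L>0\), and \(A\) is nonempty as well. There are no edges between \(A\) and \(B\) by construction of \(F\).
\end{enumerate}

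Proposition~\ref{lem:two-cluster} then yields \(\ac(G)\le2\), contradicting the obstruction hypothesis \(\ac(G)>2\).

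I expect the only delicate point to be step (2): choosing to bound \(\sum_{w\in F}(d(w)-1)\) by \(3|F|+X\) rather than via \eqref{eq:separator-ledger}. Everything else is routine bookkeeping. I would also check that the bound \(|F|\le 2k\) needs no independence of \(D_3\), since degree-\(3\) cycle vertices only make it smaller. Finally, chords of \(C\) inside \(S\) only reduce \(e(A,F)\) and \(|F|\), so they cause no harm.
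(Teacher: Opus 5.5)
Your argument is correct and matches the paper's proof. It uses the same sets \(A=V(C)\), \(F=N(A)\setminus A\), \(B\), the same bounds \(e(A,F)\le2k\) and \(|F|\le2k\), the same final condition \(12k+X\le2n\), and the same nonemptiness check before applying the two-cluster certificate.

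The only difference is how you bound \(e(B,F)\). With \(D_C=\sum_{v\in A}d(v)\), the paper starts from the total excess \(n-8\). It subtracts \(\varepsilon(A)=D_C-3k\) and the lower bound \(\varepsilon(B)\ge|B|-X-8\), which comes from \(|D_3|=X+8\), and gets \(e(B,F)\le3|F|+X+(4k-D_C)\). Your pointwise bound \(d(w)-1\le3+\max\{d(w)-4,0\}\) gives the slightly sharper \(3|F|+X\) directly, without using the edge count or \(\delta(G)\ge3\).
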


\begin{proof}
Suppose that \(G[S]\) contains a cycle \(C\) of length \(k\le L\), and
put \(A=V(C)\), \(F=N(A)\setminus A\), and
\(B=V\setminus(A\cup F)\).  Write
\(D_C=\sum_{v\in A}d(v)\), \(f=|F|\), and \(b=|B|\).  The cycle uses two
incident edges at each of its \(k\) vertices.  Since every vertex of \(A\)
has degree \(\le4\), and every vertex of \(F\) has a neighbor in \(A\),
\begin{equation}\label{eq:cycle-slice}
  e(A,F)\le D_C-2k\le2k=2|A|,
  \qquad f\le D_C-2k.
\end{equation}

There are \(X+8\) degree-\(3\) vertices, by \eqref{eq:d3-count}, so the
number of degree-\(3\) vertices in \(B\) is \(\le X+8\).  Hence at least
\(b-X-8\) vertices of \(B\) have degree \(\ge4\), and
\(\sum_{v\in B}(d(v)-3)\ge b-X-8\).  Using
\eqref{eq:total-excess} and the exact excess \(D_C-3k\) of \(A\), we obtain
\[
\begin{aligned}
  e(B,F)
  &\le2f+\sum_{v\in F}(d(v)-3)\\
  &\le2f+(n-8)-(D_C-3k)-(b-X-8)\\
  &=4k-D_C+X+3f.
\end{aligned}
\]
Thus \(e(B,F)\le2b\) follows from \(5f+6k+X-D_C\le2n\).  By
\eqref{eq:cycle-slice}, its left-hand side is
\(\le4D_C-4k+X\le12k+X\le12L+X\le2n\).

Since \(k\le L\), the hypothesis also gives \(12k\le12L\le2n-X\le2n\),
so \(n\ge6k\).  As \(f\le2k\), we have
\(b\ge n-3k\ge3k>0\).  Thus \(A\) and \(B\) are nonempty; they are
nonadjacent by the definition of \(F\).  Equation~\eqref{eq:cycle-slice}
and the preceding calculation give their two required boundary bounds.
Proposition~\ref{lem:two-cluster} therefore gives \(\ac(G)\le2\),
contradicting the definition of an obstruction.
\end{proof}

Together with the edge surplus in Lemma~\ref{lem:S-density}, the two cycle
criteria reduce the proof for \(n\ge48\) to finding a length \(L\) for which
\eqref{eq:ball-condition} and \(12L+X\le2n\) hold simultaneously.

\section{Obstruction exclusion for \texorpdfstring{\(n\ge32\)}{n >= 32}}
\label{sec:obstruction-closure}

We now rule out the obstruction class in two overlapping ranges.  For
\(n\ge48\), the exact Moore criterion and the spectral cycle-exclusion
criterion are incompatible.  For \(32\le n\le49\), the incidence census
exceeds the available high-degree capacity.

\subsection{The Moore argument for \texorpdfstring{\(n\ge48\)}{n >= 48}}

We apply the two criteria from Section~\ref{sec:cycles} at the largest cycle
length permitted by the spectral budget.  The census will place the resulting
parameters in the region
\(43\le t\le v\) and \(5v+41\le50\ell+3t\).  The following power estimate
captures the arithmetic needed in that region.

\begin{lemma}[Power growth in the Moore region]\label{lem:moore-power-growth}
Let \(t,v,\ell\) be integers satisfying
\[
 43\le t\le v,
 \qquad 5v+41\le50\ell+3t.
\]
Then the following hold.
\begin{enumerate}[label=\textup{(\roman*)}]
\item If \(4\le\ell\le7\) and \(v\le107\), then
\begin{equation}\label{eq:small-exponent-power}
 (t-9)v^\ell\le(v+2t)^\ell.
\end{equation}
\item If \(\ell\ge8\), then
\begin{equation}\label{eq:large-exponent-power}
 (v+2t)^\ell\ge(t+1)v^\ell.
\end{equation}
\end{enumerate}
\end{lemma}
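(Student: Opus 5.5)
Both inequalities become statements about the ratio \(s=t/v\in(0,1]\) once we divide by \(v^\ell\). Part~(i) reads \((1+2s)^\ell\ge t-9\), and part~(ii) reads \((1+2s)^\ell\ge t+1\). The left side increases in \(s\), so the plan is to use the constraint \(5v+41\le50\ell+3t\) to bound \(v\) above, and hence \(s\) below, in terms of \(t\) and \(\ell\). Fixing \(t\) and \(\ell\), this gives
\[
 v\le\frac{50\ell+3t-41}{5},
 \qquad
 2s\ge x(t,\ell):=\frac{10t}{50\ell+3t-41}.
\]
Combining \(t\le v\) with the same constraint gives \(2v\le50\ell-41\), hence \(t\le v\le25\ell-21\). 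Each part then reduces to a one-variable inequality in \(t\) over \(43\le t\le25\ell-21\), for each fixed \(\ell\).

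For part~(ii) we would lower-bound \((1+x)^\ell\) by the binomial truncation \(1+\ell x+\binom{\ell}{2}x^2+\binom{\ell}{3}x^3\) and show this exceeds \(t+1\). There are two regimes.
\begin{itemize}
\item When \(3t\le50\ell\), the denominator of \(x\) is at most \(100\ell\), so \(\ell x\ge t/10\). The cubic term is then at least roughly \((\ell x)^3(1-3/\ell)/6\ge c\,t^3\) with \(c\approx 1/(6000)\cdot(5/8)\). This term alone dominates \(t+1\) only when \(t^2\gtrsim10^4\), so for smaller \(t\) we must use the full truncation, or keep more binomial terms, at the lower end \(t=43\).
\item When \(3t>50\ell\), i.e.\ \(t\) is of order \(\ell\), we have \(x\ge10t/(6t)\ge 5/3\cdot(1-o(1))\). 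Then \((1+x)^\ell\ge(8/3)^\ell\) for \(\ell\) large enough, or \(2.5^\ell\) more safely. This beats \(t+1\le25\ell-20\) comfortably for \(\ell\ge8\).
\end{itemize}
Numerically the margins are large. For example, at \(\ell=8\), \(t=43\) we get \(x\approx0.88\) and \(1.88^8\approx156\ge44\). As \(\ell\to\infty\) with \(t=43\), \((1+x)^\ell\to e^{8.6}\). Alternatively, we could check that \(x(t,\ell)\cdot\ell\) increases in \(\ell\), so that \((1+x)^\ell\) is nondecreasing in \(\ell\) for fixed \(t\). That would reduce the analysis to \(\ell=8\) plus a uniform regime argument.

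Part~(i) is a finite check, since \(43\le t\le v\le107\) and \(4\le\ell\le7\). The condition \(v\le107\) restricts \(t\) to \(43\le t\le107\). For each \(\ell\) we would again substitute the minimal ratio \(x(t,\ell)\), which is the worst case. If the cap \(v\le107\) binds first, we use \(2s\ge2t/107\) instead, taking whichever ratio is larger. We then verify \((1+x)^\ell\ge t-9\) at the endpoints of \(t\). Convexity of \((1+x(t))^\ell\) is not immediate because \(x\) is concave in \(t\). So instead we would split \(t\) into a few intervals and use monotonicity: on each interval, compare the minimum of the left side (attained at the left endpoint) with the maximum of \(t-9\) (attained at the right endpoint).

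The main obstacle is the tight corner, small \(\ell\) (\(\ell=4\)) with \(t\) as large as the constraints permit. There \(t-9\) approaches \(98\) while \((1+2s)^4\) must also be near \(98\), requiring \(s\) near \(1\). We would first try to show that the constraint forces \(v\le(200+3t-41)/5\), which with \(t\le v\) gives \(t\le79\) at \(\ell=4\), so \(s\) is close to \(1\) there. Failing a clean estimate, we would fall back on a direct case enumeration over the finitely many integer triples \((t,v,\ell)\), which is legitimate since part~(i) is a bounded statement.
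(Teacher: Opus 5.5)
Your plan works, and it differs from the paper mainly in how the final one-variable inequalities are certified.

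\textbf{Comparison with the paper.} The first step is shared. The paper also uses \(5v\le M:=50\ell+3t-41\) to replace \(v\) by its worst value. For \(\ell=4\) it proves exactly your inequality \((1+10t/M)^4\ge t-9\), written as \((t-9)M^4\le(M+10t)^4\), on the same range \(t\le79\) you found. After that, the paper never evaluates real powers numerically:
\begin{itemize}
\item For \(\ell=4\) it expands the difference as a polynomial in \(u=t-43\in[0,36]\) with evidently nonnegative terms.
\item For \(5\le\ell\le7\) it truncates the binomial series after the cubic term. It proves the resulting cubic inequality by concavity in \(v\) on \([5t,M]\), with explicit endpoint polynomials in \((\ell-5,t-43)\).
\item For \(\ell\ge8\) it keeps only the single term \(\binom{\ell}{4}(2t)^4v^{\ell-4}\), reducing to \(3v^4\le2c_\ell t^3\). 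It checks this at \(t=43\), where it is nearly tight and needs the integral bound \(v\le10\ell+17\). It then propagates upward from \(t=44\) by showing \(M_s^4/s^3\) is nonincreasing.
\end{itemize}
You instead keep the full power \((1+x)^\ell\), use monotonicity in \(t\) and \(\ell\) to collapse \(\ell\ge8\) to essentially one exponent, and treat part (i) as a finite check. Your route has far more slack (about \(157\) against \(44\) at the corner \((t,\ell)=(43,8)\)) and is shorter. The paper's route gives exact polynomial certificates, uniform in \(\ell\), with no numerical evaluation or enumeration, which suits kernel-checked arithmetic.

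\textbf{Two repairs needed in part (ii).} First, your bullet for \(3t\le50\ell\) cannot close the argument. The bound \(\ell x\ge t/10\) is too lossy near \(t=43\): at \(\ell=8\) it only gives \((1+x)^8\ge(1+4.3/8)^8\approx31<44\), however many binomial terms you keep. So the monotonicity route has to carry the proof, and it needs justification.

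Put \(Y=\ell x=10t\ell/(50\ell+3t-41)\). Then \(Y\) increases in \(\ell\) because \(3t>41\). Since \((1+Y/\ell)^\ell\) increases in both \(Y\) and \(\ell\), the quantity \((1+x)^\ell\) is nondecreasing in \(\ell\) for fixed \(t\). The argument then splits on \(t\):
\begin{itemize}
\item For \(t\le179=25\cdot8-21\), every \(\ell\ge8\) is admissible, so \(\ell=8\) is the worst case. There \((1+10t/(359+3t))^8\) increases in \(t\) and is about \(157\) at \(t=43\). One further split at \(t=155\) finishes this range.
\item For \(t>179\), the smallest admissible exponent is \(\ell^*=\lceil(t+21)/25\rceil\). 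It satisfies \(50\ell^*<2t+92\le3t\). So your estimate for the regime \(3t>50\ell\) applies: \(x>5/3\), hence \((1+x)^{\ell^*}\ge(8/3)^{\ell^*}\ge25\ell^*-20\ge t+1\). Monotonicity carries this to every larger \(\ell\).
\end{itemize}
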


The proof is given in Appendix~\ref{app:moore-arithmetic}.  Its two parts
reflect different sources of binomial growth.  The case \(\ell=4\) is settled
by a direct endpoint estimate, while \(5\le\ell\le7\) uses the first three
nonconstant terms of the expansion.  Once \(\ell\ge8\), the fourth term alone
proves the stronger estimate in part~\textup{(ii)}.

\begin{proposition}[Uniform Moore arithmetic]\label{prop:moore-arithmetic}
Suppose that \(n,X,h\) are nonnegative integers satisfying
\[
 n\ge48,
 \qquad h\le X,
 \qquad 10X+7h+186\le4n.
\]
Put
\[
 t=n-4-X-3h,\qquad v=n-h,\qquad
 L=\left\lfloor\frac{2n-X}{12}\right\rfloor,\qquad
 \ell=\left\lfloor\frac L2\right\rfloor.
\]
Then \(t\ge43\), \(L\ge8\), and \eqref{eq:ball-condition} holds.
\end{proposition}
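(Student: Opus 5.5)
The plan is to verify the three conclusions in turn: \(t\ge43\), then \(L\ge8\), then the ball condition. For the last one we first check that \((t,v,\ell)\) lies in the region of Lemma~\ref{lem:moore-power-growth} and then apply that lemma.

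\emph{The bound \(t\ge43\).} This is equivalent to \(X+3h\le n-47\). Over the polygon \(h\le X\), \(10X+7h\le4n-186\), the linear form \(X+3h\) is maximized at the vertex \(h=X\). There \(17X\le4n-186\), so \(X+3h\le4X\le\frac{4(4n-186)}{17}\). This is at most \(n-47\) as soon as \(n\ge55\). For \(48\le n\le54\) we use integrality instead: \(X\le\lfloor(4n-186)/17\rfloor\le1\), and the bound is checked directly.

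\emph{The bound \(L\ge8\).} This needs \(2n-X\ge96\). From \(10X\le4n-186\) we get \(2n-X\ge1.6n+18.6\), which suffices for \(n\ge49\). For \(n=48\) integrality forces \(X=0\), and \(2n-X=96\).

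\emph{Placing \((t,v,\ell)\) in the region of Lemma~\ref{lem:moore-power-growth}.} Since \(t=v-4-X-2h\), we have \(t\le v\), and so \(43\le t\le v\). Since \(\ell\ge\frac{L-1}{2}\) and \(L\ge\frac{2n-X-11}{12}\), we have \(\ell\ge\frac{2n-X-23}{24}\). The remaining condition \(5v+41\le50\ell+3t\) reads \(2n+3X+4h+53\le50\ell\). Substituting the lower bound for \(\ell\), it suffices that
\[
 122X+96h+2422\le52n .
\]
Thirteen times the census inequality gives \(130X+91h+2418\le52n\). The comparison therefore succeeds whenever \(8X-5h\ge4\), and by \(h\le X\) this covers all \(X\ge2\).

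I expect the boundary cases \(X\le1\) to be the main obstacle, since there the crude floor estimate loses a few units. For these cases I would use the exact value of \(\ell=\lfloor\lfloor(2n-X)/12\rfloor/2\rfloor\) rather than the lower bound. The plan is to check directly, across the residues of \(n\) modulo \(24\), that the slack \(52n-130X-91h-2418\) from the census absorbs the small deficit.

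\emph{The ball condition.} Suppose first that \(\ell\ge8\). Part~(ii) of Lemma~\ref{lem:moore-power-growth} gives \((v+2t)^\ell-v^\ell\ge tv^\ell\). Multiplying by \(v+t\) yields
\[
 (v+t)\bigl((v+2t)^\ell-v^\ell\bigr)\ge t(v+t)v^\ell>t(v-1)v^\ell,
\]
which is \eqref{eq:ball-condition}.

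Now suppose \(\ell\le7\). Since \(L\ge8\), we have \(4\le\ell\), and \(L\le15\) gives \(2n-X\le191\). Combined with \(X\le(4n-186)/10\), this yields \(n\le107\), and hence \(v\le107\). Part~(i) of Lemma~\ref{lem:moore-power-growth} then gives \((v+2t)^\ell-v^\ell\ge(t-10)v^\ell\). It remains to show
\[
 (v+t)(t-10)>t(v-1),
\]
which simplifies to \(t^2-9t>10v\). Using \(t\ge43\) we get \(t^2-9t\ge43\cdot34=1462\), while \(10v\le1070\). This proves \eqref{eq:ball-condition} in all cases.
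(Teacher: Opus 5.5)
Your route is the paper's: the same three verifications, the same placement of \((t,v,\ell)\) in the region of Lemma~\ref{lem:moore-power-growth}, and the same split into \(\ell\le7\) (via \(n\le107\) and \(t^2-9t>10v\)) and \(\ell\ge8\).

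The one real gap is in the region check. You prove \(5v+41\le50\ell+3t\) only when \(8X-5h\ge4\). For the leftover cases \((X,h)=(0,0)\) and \((1,1)\) you state a plan (exact \(\ell\), residues of \(n\) modulo \(24\)) but do not carry it out, so the lemma's hypotheses are not verified there. No residue analysis is needed. The slack \(52n-130X-91h-2418\) equals \(13(4n-10X-7h-186)\), which is a nonnegative multiple of \(13\). It cannot vanish in either leftover case, because \(186\) and \(203\) are not divisible by \(4\). Hence it is at least \(13\), which exceeds the deficit \(4-(8X-5h)\in\{4,1\}\). Concretely, for \((0,0)\) the bound \(n\ge48\) gives \(52n\ge2496>2422\); for \((1,1)\) the census forces \(n\ge51\), so \(52n\ge2652\ge2640\). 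The paper avoids exceptional cases altogether: it keeps the exact remainder \(2n-X=24\ell+r\) and extracts \(h\le4\ell-12\) from the census and \(\ell\ge4\), which makes the comparison uniform.

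There is also a smaller slip in your proof of \(t\ge43\). The claim \(X\le\lfloor(4n-186)/17\rfloor\le1\) for \(48\le n\le54\) is false. The denominator \(17\) comes from \(17h\le10X+7h\), so it bounds \(h\), not \(X\); for example, \(n=54\), \((X,h)=(3,0)\) satisfies all hypotheses. Your direct check still succeeds with the correct ranges. The paper's one line is simpler and needs no case split: \(4t=4n-16-4X-12h\ge6X-5h+170\ge170\) by the census and \(h\le X\), so \(t\ge43\) by integrality.
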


\begin{proof}
The nested-floor identity
\(\lfloor\lfloor q/12\rfloor/2\rfloor=\lfloor q/24\rfloor\), applied to
\(q=2n-X\), gives \(\ell=\lfloor(2n-X)/24\rfloor\).  We may therefore write
\begin{equation}\label{eq:moore-remainder}
 2n-X=24\ell+r,\qquad 0\le r\le23.
\end{equation}
The census inequality, together with \(h\le X\), first gives
\[
 4t=4n-16-4X-12h\ge6X-5h+170\ge170,
\]
so \(t\ge43\); also \(v-t=4+X+2h\ge0\).  Moreover,
rearranging the same census inequality gives
\(10(2n-X)\ge16n+7h+186\ge954\).  Hence
\(2n-X\ge96\), \(\ell\ge4\), and \(L\ge8\).

Rewriting the census inequality using \eqref{eq:moore-remainder} gives
\begin{equation}\label{eq:moore-remainder-census}
 8X+7h+186\le48\ell+2r.
\end{equation}
The identity \(50\ell+3t-(5v+41)=26\ell-r-4X-4h-53\) shows that it remains
to prove \(4X+4h+r+53\le26\ell\).  Since \(h\le X\)
and \(r\le23\), \eqref{eq:moore-remainder-census} gives
\(15h\le48\ell-140\le15(4\ell-12)\), where the second inequality uses
\(\ell\ge4\).  Hence \(h\le4\ell-12\).
Using this and \(r\le23\) once more, we obtain
\[
\begin{aligned}
 2(4X+4h+r+53)
 &=8X+8h+2r+106\\
 &\le48\ell+h+4r-80\le52\ell.
\end{aligned}
\]
We have therefore established
\begin{equation}\label{eq:moore-region}
 43\le t\le v,\qquad \ell\ge4,\qquad
 5v+41\le50\ell+3t.
\end{equation}

Suppose first that \(\ell\le7\).  Equation
\eqref{eq:moore-remainder} gives \(2n-X\le191\), or
\(X\ge2n-191\), while the census gives \(10X\le4n-186\).  Therefore
\(20n-1910\le10X\le4n-186\), so \(n\le107\) and \(v\le107\).
Lemma~\ref{lem:moore-power-growth}\textup{(i)} now gives
\((t-9)v^\ell\le(v+2t)^\ell\).  Furthermore,
\[
 (t-10)(v+t)-\bigl(t(v-1)+1\bigr)
 =t^2-9t-10v-1>0,
\]
because its right-hand side is at least
\(43^2-9\cdot43-10\cdot107-1=391\).  It follows that
\[
\begin{aligned}
 t(v-1)v^\ell
 &<\bigl(t(v-1)+1\bigr)v^\ell\\
 &\le(v+t)(t-10)v^\ell\\
 &\le(v+t)\bigl((v+2t)^\ell-v^\ell\bigr),
\end{aligned}
\]
which is \eqref{eq:ball-condition}.

It remains to consider \(\ell\ge8\).  By
Lemma~\ref{lem:moore-power-growth}\textup{(ii)},
\((v+2t)^\ell-v^\ell\ge t v^\ell\), and hence
\[
 t(v-1)v^\ell<(v+t)t v^\ell
 \le(v+t)\bigl((v+2t)^\ell-v^\ell\bigr).
\]
This is again \eqref{eq:ball-condition}.
\end{proof}

\begin{proposition}[No obstruction in the Moore range]
\label{prop:no-obstruction-ge48}
There is no degree-\(3\)-separated obstruction of order \(n\ge48\).
\end{proposition}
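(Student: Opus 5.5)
We argue by contradiction and combine the two cycle criteria of Section~\ref{sec:cycles}.  Suppose \(G\) is a degree-\(3\)-separated obstruction of order \(n\ge48\), and let \(X\), \(h\), \(S\) be as in Section~\ref{sec:census}.

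\textbf{Step 1: the arithmetic hypotheses.}  We first check that \((n,X,h)\) satisfies the hypotheses of Proposition~\ref{prop:moore-arithmetic}.  The inequality \(h\le X\) is \eqref{eq:h-le-X}.  The census inequality \(10X+7h+186\le4n\) is Lemma~\ref{lem:moore-census}, which requires exactly \(n\ge48\).

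\textbf{Step 2: the Moore condition.}  Put
\[
t=n-4-X-3h,\qquad v=n-h=|S|,\qquad L=\lfloor(2n-X)/12\rfloor,\qquad \ell=\lfloor L/2\rfloor .
\]
Proposition~\ref{prop:moore-arithmetic} then gives \(t\ge43\), \(L\ge8\), and condition \eqref{eq:ball-condition}.

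\textbf{Step 3: a short cycle in \(G[S]\).}  We apply Proposition~\ref{prop:nb-induced-cycle} with \(U=S\).  Its hypotheses hold as follows:
\begin{itemize}[label=\textendash]
  \item \(S\) is nonempty, since \(v\ge t\ge43\).
  \item \(e(S)\ge|S|+t\) by Lemma~\ref{lem:S-density}.
  \item \(t\ge1\) is an integer.
  \item \(L\ge6\), and \eqref{eq:ball-condition} holds for our choice of \(\ell=\lfloor L/2\rfloor\).
\end{itemize}
Hence \(G[S]\) contains a cycle of length at most \(L\).

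\textbf{Step 4: the contradiction.}  By the definition of \(L\) as a floor, \(12L\le 2n-X\), that is, \(12L+X\le2n\).  Also \(L\ge3\).  So Proposition~\ref{lem:cycle-exclusion} applies and says that \(G[S]\) contains no cycle of length at most \(L\).  This contradicts Step 3, so no such obstruction exists.

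The substantive work is already contained in Proposition~\ref{prop:moore-arithmetic} and Lemma~\ref{lem:moore-census}, so this proof is essentially bookkeeping.  The only point needing care is that the same \(L\) serves both criteria: the floor definition gives \(12L+X\le2n\) directly, and \(L\ge8\) covers both the requirement \(L\ge6\) in Proposition~\ref{prop:nb-induced-cycle} and \(L\ge3\) in Proposition~\ref{lem:cycle-exclusion}.  One should also confirm that \(X\le 2n\), so that \(L\) is a meaningful nonnegative integer; this follows from the census inequality.
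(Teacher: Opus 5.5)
Your proposal is correct and follows the paper's proof essentially step for step: Lemma~\ref{lem:moore-census} and \eqref{eq:h-le-X} feed Proposition~\ref{prop:moore-arithmetic}, Proposition~\ref{prop:nb-induced-cycle} with \(U=S\) produces a cycle of length \(\le L\) in \(G[S]\), and the floor definition of \(L\) gives \(12L+X\le2n\), so Proposition~\ref{lem:cycle-exclusion} forbids that cycle. The only cosmetic difference is how you show \(S\neq\varnothing\). You use \(v\ge t\ge43\), which is valid because \(v-t=4+X+2h\). The paper instead observes that \(S\) contains the \(8+X\) degree-\(3\) vertices.
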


\begin{proof}
Suppose that \(G\) is such an obstruction.  Lemma~\ref{lem:moore-census}
and \eqref{eq:h-le-X} verify the hypotheses of
Proposition~\ref{prop:moore-arithmetic}.  Let
\[
  t=n-4-X-3h,\qquad
  v=n-h=|S|,\qquad
  L=\left\lfloor\frac{2n-X}{12}\right\rfloor,\qquad
  \ell=\left\lfloor\frac L2\right\rfloor.
\]
The proposition gives \(t\ge43\), \(L\ge8\), and
\eqref{eq:ball-condition}.  The set \(S\) is nonempty because it contains
the \(8+X\) vertices of degree \(3\), and Lemma~\ref{lem:S-density} gives
\(e(S)\ge|S|+t\).  Proposition~\ref{prop:nb-induced-cycle} therefore
produces a cycle in \(G[S]\) of length \(\le L\).  On the other hand, the
definition of \(L\) gives \(12L+X\le2n\), so
Proposition~\ref{lem:cycle-exclusion} excludes every such cycle.
This contradiction proves the proposition.
\end{proof}

\subsection{The incidence argument for
\texorpdfstring{\(32\le n\le49\)}{32 <= n <= 49}}

To cover the orders below \(48\), we return directly to the incidence census;
the resulting argument is valid throughout \(32\le n\le49\).  Retain
\(X,h,\tau(x),\tau_4\), and \(g\) from Section~\ref{sec:census}.  The vertices
counted by \(g\) are precisely those for which the star certificate does not
impose the bound \(\tau(x)\le d(x)-3\).

\begin{proposition}[Incidence-capacity contradiction]
\label{prop:incidence-capacity-contradiction}
There is no degree-\(3\)-separated obstruction of order
\(32\le n\le49\).
\end{proposition}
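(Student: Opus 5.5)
The plan is to run the incidence count of Section~\ref{sec:census} once more, but to keep track of the slack exactly instead of passing through \eqref{eq:slots}. That inequality relied on Lemma~\ref{lem:incidence-cap}(ii)--(iii) at \(n\ge48\).

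\textbf{Step 1 (slack).} For \(n\ge32\) a degree-\(4\) vertex satisfies \(9\cdot4\le n+15\), so it is nonexceptional. The star certificate (Lemma~\ref{lem:star}) therefore gives \(\tau(v)\le d(v)-3\) for every nonexceptional \(v\) of degree \(\ge4\). Define the slack \(\sigma(v)=d(v)-3-\tau(v)\ge0\) for nonexceptional \(v\), and \(\sigma(v)=d(v)-3-\tau(v)\ge-3\) for exceptional \(v\). Comparing \eqref{eq:incidence-total} with \eqref{eq:total-excess} gives the exact identity
\[
 \sum_{v:\,d(v)\ge4}\sigma(v)=n-32-3X .
\]
Combining \eqref{eq:exceptional-hoarding} and \eqref{eq:exceptional-giants} gives \(g(n-29)\le3(n-32)\). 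For \(32\le n\le49\) this forces \(g\le 2\), with \(g=0\) at \(n=32\). It also forces \(X\) to be small, roughly \(X\le(n-32+3g)/3\le 7\). Hence the total positive slack is at most about \(n-32+3g\le 23\). In words: almost every vertex of degree \(\ge4\) is \emph{saturated}, meaning \(\tau(v)=d(v)-3\), so it has exactly three neighbors outside \(D_3\).

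\textbf{Step 2 (saturated vertices are nearly independent).} Let \(Q\) be the set of saturated nonexceptional vertices of degree \(\ge4\). For adjacent \(u,v\in Q\), I would apply the decorated-edge lemma (Lemma~\ref{lem:decorated-edge}) with \(K_u,K_v\) chosen inside their degree-\(3\) neighborhoods. Where the two sets share vertices, I would instead apply the cycle certificate (Lemma~\ref{lem:cycle-certificate}) to the resulting triangle or \(4\)-cycle. By Step~1, \(X\le7\) forces \(d(u)+d(v)\) to be small, so the numerical hypotheses \(9D\le n+42\) and \(9D\le n+56\) should hold in the relevant range. The exceptions are adjacent pairs of large degree sum at the lower end of the range, which must be treated separately.

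\textbf{Step 3 (capacity count).} If \(Q\) is independent, each vertex of \(Q\) sends its three non-\(D_3\) edges to vertices of degree \(\ge4\) outside \(Q\). These are unsaturated or exceptional vertices, and there are at most about \(23+g\) of them. Their total degree is bounded by \(4\cdot\#\{\text{such vertices}\}+X\). Meanwhile \(|Q|\) is at least \(n-|D_3|-\#\{\text{unsaturated}\}-g\approx n-8-X-23\). Demanding \(3|Q|\le\) (capacity of the non-\(Q\) vertices) then yields an inequality of the shape \(3(n-O(1))\le O(1)\) with explicit constants. This is the analogue of \eqref{eq:capacity}, and it should fail for \(n\ge32\), giving the contradiction.

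\textbf{Main obstacle.} I expect the main obstacle to be Step~2 at the low end of the range. There, Lemma~\ref{lem:decorated-edge}(i) fails for degree sums \(D\ge9\) when \(n<39\). A single exceptional vertex of degree roughly \(n/9\) can also absorb many incidences. My first attempt would be to use \(X\le(n-32+3g)/3\): this forces \(X\le2\) or so when \(n\le38\), so there are essentially only degree-\(4\) vertices and \(D=8\) suffices. I would then redo Step~1 with \(g\le1\) for \(n\le40\), and split the range \(32\le n\le49\) according to the size of \(X\).
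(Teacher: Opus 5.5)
Your Step~1 is sound: the identity \(\sum_{d(v)\ge4}\sigma(v)=n-32-3X\) and the bound \(g\le2\) are both correct. Step~2, however, is a genuine gap, and it is not confined to the low end of the range. You need \emph{every} saturated nonexceptional vertex of degree \(\ge4\) to lie in an independent set, but the certificates only handle small degree sums. Lemma~\ref{lem:decorated-edge}(i) requires \(n\ge39\) for \(D=9\), \(n\ge48\) for \(D=10\), and \(n\ge57\) for \(D=11\), and the \(4\)-cycle certificate needs \(D\le10\). Three cases show the failure:
\begin{itemize}
\item Throughout \(38\le n\le47\), where \(g=0\) and \(X=2\) are permitted, nothing excludes two adjacent saturated degree-\(5\) vertices with disjoint \(D_3\)-neighbourhoods.
\item At \(n=49\), nothing excludes a saturated degree-\(6\) vertex adjacent to a saturated degree-\(5\) vertex when their \(D_3\)-neighbourhoods are disjoint. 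The same holds when they share two vertices, because the \((6,3,5,3)\) cycle has degree sum \(17>16\).
\item Your proposed patch for small \(n\) also fails, since small \(X\) does not mean ``only degree-\(4\) vertices''. At \(n=35\) with \(g=0\), \(X=1\) allows a saturated degree-\(5\) vertex. If it is adjacent to a saturated degree-\(4\) vertex with a different degree-\(3\) neighbour, Lemma~\ref{lem:decorated-edge}(i) would need \(81\le77\).
\end{itemize}
Separately, Step~3 does not close with the constants you wrote: \(3(n-31-X)\le4(23+g)+X\) is consistent for every \(n\le61\). You must keep the positive slack exactly as \(n-32-3X+3g\), or more simply compare \(3|Q|\) with \(4n-32-3X-4|Q|\). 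This gives \(7|Q|\le4n-32-3X\), and with \(|Q|\ge24+2X-4g\) it would close, but only if \(Q\) were independent.

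The missing idea is that you never need the heavy saturated vertices in \(Q\). Keep only the degree-\(4\) vertices with \(\tau=1\), the set \(O\). Their independence uses just \(D=8\) (valid for \(n\ge30\)) and the \((4,4,3)\) triangle. Heavy vertices enter only through the star bound \(\tau(x)\le d(x)-3\), each costing one unit beyond the baseline \(d(x)-4\) (exceptional ones three more), so \(24+2X\le\tau_4+h+3g\). Equivalently, discarding heavy vertices from your \(Q\) loses at most \(h\le X\) vertices, which is affordable. Combining this with \(7\tau_4+3X+32\le4n\) and \(h\le X\) gives \(200+10X\le4n+21g\). This is contradicted by \(g\le2\) when \(n\le38\), and by \(19g\le9X\) (hence \(21g\le10X\)) when \(n\ge39\). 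That is exactly the paper's argument: your slack bookkeeping is compatible with it, but the independence step must be restricted to degree \(4\).
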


\begin{proof}
Suppose that \(G\) is such an obstruction.  We first derive the two capacity
bounds
\begin{align}
  24+2X&\le\tau_4+h+3g,              \label{eq:short-slots}\\
  7\tau_4+3X+32&\le4n.               \label{eq:short-choke}
\end{align}
If \(9d(x)\le n+15\), the star certificate gives
\(\tau(x)\le d(x)-3\); for a vertex counted by \(g\), the trivial bound
costs three additional units.  Thus
\(\sum_{d(x)\ge5}\tau(x)\le X+h+3g\).  Adding the degree-\(4\) contribution
and using \(\sum_{d(x)\ge4}\tau(x)=24+3X\) proves
\eqref{eq:short-slots}.

For \eqref{eq:short-choke}, the inequality \(36\le n+15\) and
Lemma~\ref{lem:star} show that every degree-\(4\) vertex has \(\le1\)
neighbor in \(D_3\).  Hence, for
\(O=\{x:d(x)=4,\ \tau(x)=1\}\), we have \(|O|=\tau_4\).
The set \(O\) is independent.  Indeed, adjacent members with distinct
degree-\(3\) neighbors contradict Lemma~\ref{lem:decorated-edge}(i), while
a shared neighbor creates a \((4,4,3)\)-triangle covered by
Lemma~\ref{lem:cycle-certificate}.

Each member of \(O\) sends its other three edges to vertices of degree
at least \(4\) outside \(O\).  By \eqref{eq:high-degree-sum}, the degree sum
over all vertices of degree at least \(4\) is \(4n-32-3X\); after subtracting
the \(4\tau_4\) contributed by \(O\), we obtain
\(3\tau_4\le4n-32-3X-4\tau_4\), which is \eqref{eq:short-choke}.

Multiply \eqref{eq:short-slots} by \(7\), then use
\eqref{eq:short-choke} and \eqref{eq:h-le-X}.  The result is
\begin{equation}\label{eq:short-aggregate}
  200+10X\le4n+21g.
\end{equation}

If \(39\le n\le49\), then \eqref{eq:exceptional-giants} gives
\(19g\le9X\), hence \(21g\le10X\).  Equation
\eqref{eq:short-aggregate} would force \(200\le4n\), contrary to
\(n\le49\).

Now let \(32\le n\le38\).  Three times
\eqref{eq:exceptional-hoarding}, together with
\eqref{eq:exceptional-giants}, gives
\[
 g(n-29)\le3(n-32)\le2(n-29),
\]
so \(g\le2\).  Equation \eqref{eq:short-aggregate} then gives
\(200\le4n+21g\le194\), again a contradiction.
\end{proof}

The two arguments overlap at orders \(48\) and \(49\) and together exclude
the obstruction class above order \(31\).

\begin{theorem}[Orders at least thirty-two]\label{thm:range-ge32}
If \(n\ge32\) and \(G\) is a graph on \(n\) vertices with
\(2(n-2)\) edges, then \(\ac(G)\le2\).
\end{theorem}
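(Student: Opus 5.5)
The plan is to argue by contradiction and assemble results already proved. Suppose \(n\ge32\) and \(G\) has \(n\) vertices, \(2(n-2)\) edges, and \(\ac(G)>2\).

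The first step is to place \(G\) in the obstruction class of Definition~\ref{def:separated-obstruction}.

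1. Suppose some vertex \(u\) has \(d(u)\le2\). Its closed neighborhood has at most \(3<n\) vertices, so the set \(T=V\setminus(\{u\}\cup N(u))\) is nonempty. Lemma~\ref{lem:low-degree} then gives \(\ac(G)\le2\), a contradiction. Hence \(\delta(G)\ge3\).
2. Since \(n\ge32\ge10\), the edge count is \(2(n-2)\), and \(\delta(G)\ge3\), Lemma~\ref{lem:adjacent-three} applies. Any two adjacent degree-\(3\) vertices would give \(\ac(G)\le2\). So \(D_3(G)\) is independent.
3. Together with \(\ac(G)>2\), this makes \(G\) a degree-\(3\)-separated obstruction of order \(n\).

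The second step is to exclude such an obstruction by splitting on \(n\):

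\[
\begin{cases}
32\le n\le49, & \text{excluded by Proposition~\ref{prop:incidence-capacity-contradiction}},\\
n\ge48, & \text{excluded by Proposition~\ref{prop:no-obstruction-ge48}}.
\end{cases}
\]

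These two ranges overlap at \(n=48,49\), so together they cover every \(n\ge32\). In either case no obstruction of order \(n\) exists, which contradicts the first step. Hence \(\ac(G)\le2\).

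No new estimate is needed. The one point to check is that the hypotheses of each lemma invoked match the range: the order bounds \(n\ge10\) for Lemma~\ref{lem:adjacent-three} and \(n\ge4\) for nonemptiness of \(T\), and that the two propositions are stated for the obstruction class exactly as produced in the first step. Both are immediate. The substantive work was already done in those two propositions.
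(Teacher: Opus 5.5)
Your proposal is correct and is essentially the paper's proof. Lemma~\ref{lem:low-degree} and Lemma~\ref{lem:adjacent-three} place a putative counterexample in the degree-\(3\)-separated obstruction class, which Proposition~\ref{prop:incidence-capacity-contradiction} excludes for \(32\le n\le49\) and Proposition~\ref{prop:no-obstruction-ge48} excludes for larger \(n\); the paper invokes the latter only for \(n\ge50\) rather than \(n\ge48\), which makes no difference given the overlap you note.
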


\begin{proof}
Suppose, for a contradiction, that \(\ac(G)>2\).  A vertex of degree
\(\le2\) would have a closed neighborhood of size \(\le3\), whose complement
is nonempty; Lemma~\ref{lem:low-degree} would then give \(\ac(G)\le2\).
Thus \(\delta(G)\ge3\).  Similarly, Lemma~\ref{lem:adjacent-three} shows that
no two degree-\(3\) vertices are adjacent.  Hence \(G\) is a
degree-\(3\)-separated obstruction.  If \(n\le49\), this contradicts
Proposition~\ref{prop:incidence-capacity-contradiction}; if \(n\ge50\), it
    contradicts Proposition~\ref{prop:no-obstruction-ge48}.
\end{proof}

Thus every graph of order \(n\ge32\) satisfies the desired bound.  Below this
threshold, the same degree count still forces a shared degree-\(4\) hub, but
additional local certificates are needed to exclude that configuration.

\section{Orders below thirty-two}
\label{sec:orders-below-thirty-two}

We develop those local certificates and complete the remaining range.
Orders \(4\) through \(9\) follow from low-degree configurations.  For orders
\(10\) through \(31\), a sparse-set argument, supplemented by a short endpoint
census, excludes the shared hub forced by the incidence count.

\subsection{Additional low-order cut certificates}

For a set \(A\subseteq V\), write
\(\partial A=e(A,V\setminus A)\).  We use the following direct
consequences of the variational principle.

\begin{proposition}[Basic cut certificates]\label{lem:cut-certificates}
Let \(G\) have \(n\ge4\) vertices.  Each of the following conditions implies
\(\ac(G)\le2\).
\begin{enumerate}[label=\textup{(\roman*)}]
  \item \(\varnothing\ne A\ne V\) and
  \(n\,\partial A\le2|A|(n-|A|)\).
  \item Disjoint nonempty sets \(A_+,A_-\) have the same cardinality \(k\),
  and, for \(O=V\setminus(A_+\cup A_-)\),
  \(4e(A_+,A_-)+e(A_+,O)+e(A_-,O)\le4k\).
  \item Vertices \(x,y,z\) form a triangle and
  \(n(d(x)+d(y)+d(z)-6)\le6(n-3)\).
  \item Four distinct vertices \(a,b,c,d\) induce exactly the two edges
  \(ab\) and \(cd\), that is, an induced copy of \(2K_2\), and
  \(d(a)+d(b)+d(c)+d(d)\le12\).
\end{enumerate}
\end{proposition}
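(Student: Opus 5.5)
We prove all four certificates with the variational formula \eqref{eq:variational}. For each one we exhibit a nonzero test vector with coordinate sum zero, and then check that its Dirichlet energy is at most twice its squared norm. Parts (i) and (ii) are the two basic test vectors. Parts (iii) and (iv) are obtained by specializing them.

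For (i), write \(a=|A|\), which satisfies \(0<a<n\). Set \(x_v=n-a\) on \(A\) and \(x_v=-a\) on \(V\setminus A\). This vector is nonzero and its coordinates sum to \(a(n-a)-(n-a)a=0\). Its squared norm is
\[
a(n-a)^2+(n-a)a^2=a(n-a)n .
\]
Only the \(\partial A\) edges crossing the cut change value, and each changes by \(n\). The energy is therefore \(n^2\partial A\). The required inequality \(n^2\partial A\le2n\,a(n-a)\) is exactly the hypothesis after dividing by \(n\).

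For (ii), set \(x_v=1\) on \(A_+\), \(x_v=-1\) on \(A_-\), and \(x_v=0\) on \(O\). Since \(|A_+|=|A_-|=k\), the sum is zero. The squared norm is \(2k\). The energy receives \(4\) from each edge between \(A_+\) and \(A_-\) and \(1\) from each edge between \(A_\pm\) and \(O\). Edges inside a single class contribute nothing. The energy is thus \(4e(A_+,A_-)+e(A_+,O)+e(A_-,O)\), and the hypothesis says this is at most \(4k=2\cdot 2k\).

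For (iii), apply (i) with \(A=\{x,y,z\}\). This \(A\) is proper and nonempty because \(n\ge4\). The triangle absorbs two incidences at each vertex, so \(\partial A=d(x)+d(y)+d(z)-6\). With \(|A|=3\), condition (i) becomes \(n(d(x)+d(y)+d(z)-6)\le6(n-3)\), which is the hypothesis.

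For (iv), apply (ii) with \(A_+=\{a,b\}\), \(A_-=\{c,d\}\) and \(k=2\). Because the four vertices induce only \(ab\) and \(cd\), we have \(e(A_+,A_-)=0\). Each vertex sends all but one of its edges to \(O\), so
\[
e(A_+,O)+e(A_-,O)=d(a)+d(b)+d(c)+d(d)-4 .
\]
The condition \(\le4k=8\) is exactly \(d(a)+d(b)+d(c)+d(d)\le12\).

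None of these steps is a real obstacle. The only points needing care are the two counts just used: that \(A\) in (i) and (iii) is a proper nonempty subset, and that the induced-\(2K_2\) hypothesis in (iv) excludes the cross terms.
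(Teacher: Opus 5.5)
Your proposal is correct and matches the paper's proof: you use the same two test vectors (values \(n-|A|\) and \(-|A|\) for the cut, and \(1,-1,0\) on \(A_+,A_-,O\)), and you obtain (iii) and (iv) by specializing (i) to the triangle and (ii) to the induced \(2K_2\), exactly as the paper does. Your explicit check that \(e(A_+,A_-)=0\) in (iv) fills in a detail the paper leaves implicit.
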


\begin{proof}
For (i), assign the value \(n-|A|\) on \(A\) and \(-|A|\) on its
complement.  This vector has squared norm \(n|A|(n-|A|)\) and Dirichlet
energy \(n^2\partial A\); hence the hypothesis makes its Rayleigh quotient
\(\le2\), and \eqref{eq:variational} proves (i).  For (ii), use the vector
taking the values \(1,-1,0\) on \(A_+,A_-,O\), respectively.  Its squared
norm is \(2k\), and its Dirichlet energy is the left-hand side of the stated
inequality, so \eqref{eq:variational} likewise proves (ii).  Part (iii)
follows from (i), because the triangle has boundary
\(d(x)+d(y)+d(z)-6\).  For (iv), apply (ii) with \(A_+=\{a,b\}\) and
\(A_-=\{c,d\}\); their total boundary into \(O\) is \(\le12-4=8\).
\end{proof}

The following elementary fact converts the absence of the last two
certificates into a restriction on a low-degree induced subgraph.  It also
follows from the structural results of Chung et
al.~\cite[Corollary to Theorem~1 and Theorem~2]{chung1990}; for completeness,
we give a direct proof for graphs with maximum degree \(\le3\).

\begin{lemma}[Triangle-free induced matching]
\label{lem:triangle-free-induced-matching}
Every triangle-free graph \(J\) on at least eight vertices, with no
isolated vertex and maximum degree \(\le3\), contains an induced
copy of \(2K_2\).
\end{lemma}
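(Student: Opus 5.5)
Suppose, for contradiction, that \(J\) is triangle-free, has no isolated vertex, has maximum degree \(\le3\), has \(N\ge8\) vertices, and contains no induced \(2K_2\).  The plan is to show that the diameter of each component is tiny, then bound the component size, and finally handle the case of several components.

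\emph{Several components.}  If \(J\) is disconnected, pick an edge in each of two components.  Since there are no edges between components, these two edges induce \(2K_2\).  Hence \(J\) is connected.

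\emph{Connected case.}  Take an edge \(uv\) and consider any edge \(xy\) disjoint from \(\{u,v\}\).  Because no induced \(2K_2\) exists, some vertex of \(\{x,y\}\) is adjacent to \(u\) or \(v\).  Thus every edge of \(J\) meets \(N(u)\cup N(v)\).  Triangle-freeness makes \(N(u)\) and \(N(v)\) disjoint, and each of them is an independent set.  We have \(|N(u)\setminus\{v\}|\le2\) and \(|N(v)\setminus\{u\}|\le2\), so \(W=N(u)\cup N(v)\) has at most \(6\) vertices, \(u\) and \(v\) included.  Every other vertex \(z\notin W\) is non-isolated.  Its incident edge must meet \(W\), and \(z\) cannot be adjacent to \(u\) or \(v\).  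Hence each such \(z\) is adjacent to one of the at most four vertices \(a\in N(u)\setminus\{v\}\) or \(b\in N(v)\setminus\{u\}\).  Each of these has at most two further neighbors, giving \(N\le6+8=14\).

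This bound is too weak, so I will refine it using the \(2K_2\)-freeness against the edges \(ua\) and \(vb\) as well.  Take \(z\) adjacent to \(a\in N(u)\setminus\{v\}\).  The edge \(za\) and the edge \(vb\) (for \(b\ne\) any neighbor of \(z\)) must be joined by an edge.  Since \(a\not\sim v\) (no triangle \(uva\)), and \(a\not\sim b\) would be forced unless \(z\sim b\) or \(z\sim v\) (impossible), we get \(z\sim b\) or \(a\sim b\).  This forces each outside vertex to be adjacent to essentially all of \(N(v)\setminus\{u\}\) or to produce edges \(a\sim b\).  Combined with the degree cap \(3\), I expect a short case analysis to show \(N\le7\).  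The case analysis should split on whether \(|N(u)\setminus\{v\}|\) and \(|N(v)\setminus\{u\}|\) equal \(0,1,2\); choosing \(uv\) to maximize \(d(u)+d(v)\) will help.  The extremal configurations should look like \(C_5\), or \(K_{3,3}\) minus small pieces, all on at most \(7\) vertices.

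\emph{Expected obstacle.}  The main obstacle is making this last case analysis clean.  The alternative I would try first is a counting argument, which proceeds in three steps.
\begin{enumerate}[label=\textup{(\arabic*)}]
  \item A connected \(2K_2\)-free triangle-free graph has diameter \(\le3\), and in fact every two edges are at distance \(\le1\).
  \item Bipartite such graphs are ``chain graphs'': the neighborhoods on each side are nested.  With maximum degree \(3\) and no isolated vertex, each side then has \(\le3\) vertices, so \(N\le6\).
  \item In the non-bipartite case the graph contains a shortest odd cycle, which must be \(C_5\) since an induced \(C_7\) or longer contains \(2K_2\).  Every vertex off the \(C_5\) must dominate enough of it to avoid an induced \(2K_2\) with a cycle edge, yet cannot be adjacent to two consecutive cycle vertices (triangle).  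This gives at most two cycle neighbors, and one checks no extra vertex fits, so \(N=5\).
\end{enumerate}
Either route gives \(N\le7<8\), which is the required contradiction.
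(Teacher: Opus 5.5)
\textbf{There is a genuine gap: the non-bipartite case is never actually bounded, and the bound you assert for it is false.}

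Your first route stops at \(N\le14\). It only anticipates the case analysis that would bring this down to \(7\). In the second route, the disconnected case and steps (1)--(2) are fine: bipartite \(2K_2\)-free graphs have nested neighborhoods on each side, and with no isolated vertices each side has at most \(3\) vertices.

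Step (3), however, claims that no vertex can be added to the \(5\)-cycle, and this is false. Label the cycle \(0,1,2,3,4\) and add a vertex \(w\) adjacent to \(0\) and \(2\), so that \(w\) is a false twin of \(1\). One checks directly that this \(6\)-vertex graph is triangle-free, \(2K_2\)-free and subcubic. Moreover, \(K_{3,3}\) with one edge subdivided is triangle-free, \(2K_2\)-free, subcubic and non-bipartite on seven vertices. So the non-bipartite case is exactly where the bound \(7\) is attained, and it needs a real argument. The upper bound of two cycle neighbors that you derive does not control \(N\). What controls it is a lower bound on cycle neighbors together with the degree budget on the cycle.

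The missing step is as follows. The shortest odd cycle is induced, since a chord would produce a shorter odd cycle. Every vertex \(w\) off this \(5\)-cycle has exactly two cycle neighbors.
\begin{enumerate}
\item If \(w\) had exactly one cycle neighbor \(i\), then the edges \(wi\) and \(\{i+2,i+3\}\) would induce \(2K_2\).
\item If \(w\) had none, take a neighbor \(w'\) of \(w\). It lies off the cycle, and by triangle-freeness its cycle neighbors lie in some pair \(\{j-1,j+1\}\). Then \(ww'\) and \(\{j+2,j+3\}\) induce \(2K_2\).
\end{enumerate}
Each cycle vertex already has two neighbors on the cycle, so it has at most one neighbor outside. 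Each outside vertex consumes two of these five slots, so there are at most two outside vertices and \(N\le7\).

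With this repair, your bipartite/non-bipartite split is a valid alternative to the paper's proof. The paper instead fixes a vertex \(x\) of maximum degree, shows that every vertex lies within distance \(2\) of \(x\), and bounds the second layer by counting the traces \(N(b)\cap N(x)\). Your route is more structural and also exhibits the extremal example.
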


\begin{proof}
Suppose otherwise, and choose a vertex \(x\) of maximum degree
\(\Delta\).  Every vertex is at distance \(\le2\) from \(x\).
Indeed, if \(z\) were farther away, choose \(z'\in N(z)\).  The set
\(N(x)\) cannot be contained in \(N(z')\), since
\(z\in N(z')\setminus N(x)\) would give \(d(z')>\Delta\).  An edge
from \(x\) to a vertex in \(N(x)\setminus N(z')\), together with
\(zz'\), would then induce \(2K_2\).

Let \(B\) be the vertices at distance \(2\) from \(x\), and put
\(C_b=N(b)\cap N(x)\) for \(b\in B\).  Each \(C_b\) is nonempty.
Triangle-freeness and the absence of an induced \(2K_2\) imply, for
distinct \(b,b'\in B\),
\[
 C_b\cap C_{b'}=\varnothing\quad\Longleftrightarrow\quad bb'\in E(J),
 \qquad
 bb'\in E(J)\quad\Longrightarrow\quad C_b\cup C_{b'}=N(x).
\]
Every member of \(N(x)\) lies in \(\le\Delta-1\) of the sets
\(C_b\), so \(\sum_{b\in B}|C_b|\le\Delta(\Delta-1)\).  Hence
\(|B|\le2\) if \(\Delta\le2\).  If \(\Delta=3\) and \(|B|\ge4\),
at least two of the \(C_b\) are singletons.  They cannot be distinct,
and if two are equal, their common element already lies in
\(\Delta-1=2\) of the sets \(C_b\).  Every remaining \(C_b\) is therefore
disjoint from that singleton and, by the preceding implications, equals its
complement in \(N(x)\).  A vertex indexed by such a complementary
two-set is adjacent to both singleton occurrences and to two vertices in
\(N(x)\), giving it degree at least \(4\), a contradiction.  Thus
\(|B|\le3\), and in every case
\(|V(J)|\le1+\Delta+|B|\le7\).
\end{proof}

\subsection{Small orders \texorpdfstring{\(4\le n\le9\)}{4 <= n <= 9}}

The preceding cut certificates settle the smallest orders directly.

\begin{proposition}\label{prop:orders-4-9}
If \(4\le n\le9\) and \(G\) has \(n\) vertices and \(2(n-2)\) edges, then
\(\ac(G)\le2\).
\end{proposition}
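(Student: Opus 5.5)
The plan is to argue by contradiction: assume \(\ac(G)>2\) and show that for \(4\le n\le9\) the degree budget leaves almost no freedom, after which the cut certificates of Proposition~\ref{lem:cut-certificates} together with Lemma~\ref{lem:triangle-free-induced-matching} finish the argument.

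\emph{Minimum degree.} First apply Lemma~\ref{lem:low-degree}. If a vertex \(u\) has \(d(u)\le2\), then its closed neighborhood has at most \(3<n\) vertices, so \(T=V\setminus(\{u\}\cup N(u))\) is nonempty. The lemma then gives \(\ac(G)\le2\), so we may assume \(\delta(G)\ge3\). The degree sum is \(4n-8\), so \(3n\le4n-8\), which forces \(n\ge8\). Orders \(4\le n\le7\) are therefore done immediately. Moreover, \eqref{eq:total-excess} gives \(\sum_v(d(v)-3)=n-8\):
\begin{itemize}
  \item for \(n=8\), \(G\) is cubic;
  \item for \(n=9\), \(G\) has one vertex \(w\) of degree \(4\) and eight vertices of degree \(3\).
\end{itemize}

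\emph{Excluding triangles.} Next rule out triangles with Proposition~\ref{lem:cut-certificates}(iii). A triangle has degree sum \(9\) when \(n=8\), and degree sum at most \(10\) when \(n=9\). The required inequality \(n(d(x)+d(y)+d(z)-6)\le6(n-3)\) becomes \(3n\le6n-18\) in the first case, which holds for \(n\ge6\). In the second case it becomes \(4n\le6n-18\), which holds exactly when \(n\ge9\). So \(G\) is triangle-free in both orders.

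\emph{Finding the induced matching.} The final step applies Lemma~\ref{lem:triangle-free-induced-matching} to an \(8\)-vertex graph \(J\) and then invokes Proposition~\ref{lem:cut-certificates}(iv).
\begin{itemize}
  \item For \(n=8\), take \(J=G\). It is cubic, triangle-free, and has no isolated vertex.
  \item For \(n=9\), take \(J=G-w\). It has eight vertices, each of degree at most \(3\), and it is triangle-free. It has no isolated vertex, because each vertex of \(J\) has degree \(3\) in \(G\) and loses at most one neighbor, namely \(w\).
\end{itemize}
The lemma yields an induced \(2K_2\) in \(J\). Since \(J\) is an induced subgraph of \(G\), this is also an induced \(2K_2\) of \(G\). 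Its four vertices all have degree \(3\) in \(G\), so their degree sum is \(12\), and Proposition~\ref{lem:cut-certificates}(iv) gives \(\ac(G)\le2\), a contradiction.

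The only delicate point is the \(n=9\) triangle check. There the inequality \(4n\le6n-18\) holds with equality exactly at \(n=9\), so it must be verified carefully. Beyond that, one only needs to confirm that deleting \(w\) preserves the hypotheses of Lemma~\ref{lem:triangle-free-induced-matching}.
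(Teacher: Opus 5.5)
Your proof is correct and follows essentially the paper's route: your graph \(J\) is exactly \(G[D_3(G)]\), and the paper likewise uses Proposition~\ref{lem:cut-certificates}(iii) on a triangle, and otherwise Lemma~\ref{lem:triangle-free-induced-matching} together with Proposition~\ref{lem:cut-certificates}(iv). The only difference is that the paper excludes triangles only inside \(G[D_3(G)]\), where the degree sum is \(9\), so your tight equality check for triangles through the degree-\(4\) vertex at \(n=9\) is valid but unnecessary.
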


\begin{proof}
For \(n\le7\), the degree sum satisfies \(4n-8<3n\), so
\(\delta(G)\le2\).  The closed neighborhood of a vertex of degree
\(\le2\) has \(\le3\) vertices, and Lemma~\ref{lem:low-degree}
applies.

It remains to consider \(n=8,9\).  We may again assume
\(\delta(G)\ge3\).  Write \(D_3\) for \(D_3(G)\).  By
\eqref{eq:total-excess},
\(|V\setminus D_3|\le n-8\le1\), so \(|D_3|\ge8\).  Every vertex of
\(G[D_3]\) has degree between \(2\) and \(3\) in \(G[D_3]\).  If
\(G[D_3]\) contains a triangle, its three
vertices all have degree \(3\) in \(G\), and
Proposition~\ref{lem:cut-certificates}(iii) applies.  Otherwise
Lemma~\ref{lem:triangle-free-induced-matching} gives an induced
\(2K_2\) in \(G[D_3]\); its four endpoints have total degree \(12\), so
Proposition~\ref{lem:cut-certificates}(iv) applies.
\end{proof}

\subsection{The shared-hub range \texorpdfstring{\(10\le n\le31\)}{10 <= n <= 31}}

We first show that any obstruction in this range must contain a degree-\(4\)
vertex adjacent to two degree-\(3\) vertices.

\begin{lemma}[Shared-hub existence]\label{lem:shared-hub-existence}
Every degree-\(3\)-separated obstruction of order \(10\le n\le31\)
has a degree-\(4\) vertex with at least two neighbors in \(D_3(G)\).
\end{lemma}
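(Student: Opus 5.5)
The plan is a direct double count by contradiction, using only the census identities of Section~\ref{sec:census}. None of the $n$-dependent local certificates (the star certificate or Lemma~\ref{lem:incidence-cap}) are needed, which matters because their thresholds fail for $n\le31$. Suppose $G$ is a degree-$3$-separated obstruction of order $10\le n\le31$ in which every degree-$4$ vertex has $\tau(v)\le1$. I will count the edges leaving $D_3$ once exactly and once from above, and show that the two counts are compatible only when $n\ge32$.

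First, record the vertex classes. By \eqref{eq:d3-count} there are $|D_3|=8+X$ degree-$3$ vertices and $h$ heavy vertices. Writing $n_4$ for the number of degree-$4$ vertices, we get
\[
 n_4=n-(8+X)-h .
\]
(As a check, this is consistent with \eqref{eq:total-excess}: $n_4+\sum_{v\in R}(d(v)-3)=n_4+X+h=n-8$.)

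Next, bound the incidence total \eqref{eq:incidence-total} from above. Because $D_3$ is independent, every edge leaving $D_3$ ends at a vertex of degree at least $4$. Under the contrary assumption, the degree-$4$ vertices contribute $\tau_4\le n_4$. For each heavy vertex I use only the trivial bound $\tau(v)\le d(v)$, so the heavy vertices contribute at most $\sum_{v\in R}d(v)=4h+X$. This gives
\[
 24+3X\le n_4+4h+X=(n-8-X-h)+4h+X=n-8+3h .
\]

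Finally, apply \eqref{eq:h-le-X}, $h\le X$, so that $3h\le3X$. The previous inequality then yields $24+3X\le n-8+3X$, that is, $n\ge32$. This contradicts $n\le31$. Hence some degree-$4$ vertex has at least two neighbors in $D_3$.

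I expect no real obstacle. The only step needing care is checking the bookkeeping of $n_4$ against \eqref{eq:d3-count} and the definition of $X$. It is worth noting why the trivial bound at heavy vertices suffices here: the deficit relative to $n\ge32$ is exactly absorbed by $h\le X$. This also explains the stated threshold $31$, since at $n=32$ this count no longer produces a contradiction.
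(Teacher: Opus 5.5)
Your proof is correct and is essentially the paper's argument. Both bound the incidence total $24+3X$ by giving each degree-$4$ vertex at most one incidence and each heavy vertex at most $d(v)=(d(v)-3)+3$, which yields $24+3X\le n-8+3h\le n-8+3X$; the only difference is that you compute $n_4=n-8-X-h$ explicitly, whereas the paper uses $\sum_{d(x)\ge4}(d(x)-3)=n-8$ directly.
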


\begin{proof}
Suppose every degree-\(4\) vertex has \(\le1\) such neighbor, and let
\(c\) be the number of vertices of degree at least \(5\).  A degree-\(4\)
vertex contributes \(\le d(x)-3\) to the incidence count, while a
vertex of degree at least \(5\) contributes \(\le(d(x)-3)+3\).
Also \(c\le X\).  Using
\eqref{eq:incidence-total} and
\(\sum_{d(x)\ge4}(d(x)-3)=n-8\) gives
\[
 24+3X
 \le\sum_{d(x)\ge4}(d(x)-3)+3c
 \le n-8+3X,
\]
which forces \(n\ge32\), a contradiction.
\end{proof}

It therefore suffices to exclude this shared-hub configuration.  The
general sparse-set argument reduces its only difficult branch to orders
\(10\) through \(15\); the following elementary incidence fact is needed
at that endpoint.

\begin{lemma}[Six-column incidence lemma]\label{lem:six-column}
Let \(A_0,A_1,A_2\) be disjoint two-element sets and let \(B\) have six
elements.  Suppose \(J\) is a bipartite graph with parts
\(A_0\cup A_1\cup A_2\) and \(B\), and every vertex of \(B\) and both
vertices of \(A_0\) have degree \(3\) in \(J\).  Then two vertices of
\(A_0\cup A_1\cup A_2\) have at least two common neighbors in \(B\),
where either they belong to the same \(A_i\), or one belongs to \(A_0\)
and the other to \(A_1\cup A_2\).
\end{lemma}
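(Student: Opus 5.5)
The plan is a double count of the pairs that occur inside a single neighborhood \(N_J(b)\), \(b\in B\), using only the pairs that involve \(A_0\). Suppose, for contradiction, that every allowed pair has at most one common neighbor in \(B\). Here an allowed pair means two vertices of the same \(A_i\), or one vertex of \(A_0\) and one of \(A_1\cup A_2\). Write \(A_0=\{a,a'\}\) and \(c=|N_J(a)\cap N_J(a')|\). Since \(\{a,a'\}\) is an allowed pair, the assumption gives \(c\le1\).

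Next count the ordered incidences \((a_0,b,x)\) with \(a_0\in A_0\), \(b\in N_J(a_0)\), and \(x\in N_J(b)\setminus\{a_0\}\).
\begin{itemize}
  \item Both vertices of \(A_0\) have degree \(3\), so there are \(6\) edges \(a_0b\).
  \item Each \(b\in B\) has degree \(3\), so every such edge yields exactly \(2\) choices of \(x\).
\end{itemize}
This gives \(12\) incidences in total. The incidences with \(x\in A_0\) are exactly those with \(b\) a common neighbor of \(a\) and \(a'\), each such \(b\) counted twice, so they number \(2c\le2\). Hence at least \(10\) incidences have \(x\in A_1\cup A_2\).

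Each such incidence witnesses that \(b\) is a common neighbor of the pair \(\{a_0,x\}\). This pair is allowed, since one vertex lies in \(A_0\) and the other in \(A_1\cup A_2\). There are only \(|A_0|\cdot|A_1\cup A_2|=2\cdot4=8\) such pairs. Since \(10>8\), the pigeonhole principle gives some pair \(\{a_0,x\}\) witnessed by two incidences. These two incidences have distinct vertices \(b\), because \(a_0\) and \(x\) together determine the incidence once \(b\) is fixed. So that pair has at least two common neighbors in \(B\), contradicting the assumption.

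I expect no real obstacle. The only care needed is in handling the common neighbors of the two \(A_0\) vertices: the bound \(c\le1\) is exactly where the same-class alternative in the statement enters. Note that neither the degrees in \(A_1\cup A_2\) nor the split between \(A_1\) and \(A_2\) is used beyond \(|A_1\cup A_2|=4\).
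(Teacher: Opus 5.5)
Your proof is correct, and it takes a different and much shorter route than the paper's.

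\textbf{The paper's route.} The paper encodes each \(b\in B\) as a column \((\alpha_b,\beta_b,\gamma_b)\) counting its neighbors in \(A_0,A_1,A_2\).
\begin{enumerate}[label=(\arabic*)]
  \item It uses the same-class hypothesis for all three pairs \(A_0,A_1,A_2\) to bound the number of entries equal to \(2\) by \(3\). Since \(|B|=6\), at least three columns are balanced, that is, equal to \((1,1,1)\).
  \item It combines the pointwise bound \(\mathbf1_{\{b\ \mathrm{balanced}\}}+2\mathbf1_{\{\gamma_b=0\}}\le\alpha_b\beta_b\) with \(\sum_b\alpha_b\beta_b\le4\) and \(\sum_b\alpha_b\gamma_b\le4\) to force \(\beta_b,\gamma_b\ge1\) for every \(b\).
  \item It concludes that all six columns are balanced, which contradicts \(\sum_b\alpha_b\beta_b\le4\).
\end{enumerate}

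\textbf{Your route in the same notation.} Your double count is the single identity
\(\sum_b\alpha_b(\beta_b+\gamma_b)=3\sum_b\alpha_b-\sum_b\alpha_b^2=18-(6+2c)=12-2c\).
Under the contrary assumption the left side is at most \(8\), because it equals \(\sum_{a_0\in A_0,\,x\in A_1\cup A_2}|N_J(a_0)\cap N_J(x)|\). This is incompatible with \(c\le1\).

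\textbf{What your route buys.} It is shorter, and it needs fewer hypotheses. It never uses the same-class pairs inside \(A_1\) or \(A_2\), the split of \(A_1\cup A_2\), or any degrees in \(A_1\cup A_2\). It also does not use \(|B|=6\); it only needs the three neighbors of each \(A_0\)-vertex to have degree \(3\). It proves a stronger conclusion: the pair with two common neighbors is either \(A_0\) itself or a cross pair between \(A_0\) and \(A_1\cup A_2\). In the order-\(15\) application of Proposition~\ref{prop:endpoint-shared-hub}, the same-class case would then arise only for \(F_0\), with \(p=z\). The paper's column analysis yields nothing beyond this; it simply consumes more of the available hypotheses than the conclusion requires.
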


\begin{proof}
Suppose not.  For \(b\in B\), put
\(\alpha_b=|N_J(b)\cap A_0|\), \(\beta_b=|N_J(b)\cap A_1|\), and
\(\gamma_b=|N_J(b)\cap A_2|\), and regard
\((\alpha_b,\beta_b,\gamma_b)\) as the column indexed by \(b\).  For a
condition \(P\), let \(\mathbf1_P\) denote its indicator.  Then
\(\alpha_b,\beta_b,\gamma_b\le2\), and
\(\alpha_b+\beta_b+\gamma_b=3\).  Moreover,
\[
 \sum_{b\in B}\alpha_b=6,\qquad
 \sum_{b\in B}\!\bigl(\mathbf1_{\alpha_b=2}
   +\mathbf1_{\beta_b=2}+\mathbf1_{\gamma_b=2}\bigr)\le3,
\]
while
\(\sum_b\alpha_b\beta_b\le4\) and
\(\sum_b\alpha_b\gamma_b\le4\).

Call the column indexed by \(b\) \emph{balanced} if
\((\alpha_b,\beta_b,\gamma_b)=(1,1,1)\), and let \(r_{\mathrm{bal}}\) be the
number of balanced columns.  Every other column has an entry equal to \(2\),
so \(r_{\mathrm{bal}}\ge3\).  Pointwise,
\[
 \mathbf1_{\{b\ {\rm balanced}\}}+2\mathbf1_{\{\gamma_b=0\}}
 \le\alpha_b\beta_b.
\]
After summing, \(r_{\mathrm{bal}}+2|\{b:\gamma_b=0\}|\le4\), and hence
\(\gamma_b\ge1\) for every \(b\).  The same argument with
\(\beta\) and \(\gamma\) interchanged gives \(\beta_b\ge1\).
Consequently \(\alpha_b\le1\), with equality only in a balanced
column.  Thus \(6=\sum_b\alpha_b\le r_{\mathrm{bal}}\), so all six columns are
balanced.  But then \(\sum_b\alpha_b\beta_b=6\), contradicting its
upper bound \(4\).
\end{proof}

We now isolate the finite endpoint of the shared-hub argument.  The
statement includes only the hard branch; the two easy branches will be
handled uniformly afterward.

\begin{proposition}[Endpoint shared-hub census]
\label{prop:endpoint-shared-hub}
Let \(10\le n\le15\), let \(G\) have \(n\) vertices and \(2(n-2)\)
edges, and suppose that \(\delta(G)\ge3\) and \(D_3(G)\) is independent.
Let \(z\) have degree \(4\) and distinct neighbors \(u,v\in D_3(G)\).
Put \(A=\{z,u,v\}\), \(F=N(A)\setminus A\), and
\(B=V\setminus(A\cup F)\).
If \(e(B,F)>2|B|\) and \(I(B)\le2\varepsilon(B)\), then \(\ac(G)\le2\).
\end{proposition}

\begin{proof}
Write \(f=|F|\), \(b=|B|\), \(q=e(B,F)\),
\(I_B=I(B)=2e(B)\), \(I_F=I(F)=2e(F)\), and
\(\varepsilon_F=\varepsilon(F)\), \(\varepsilon_B=\varepsilon(B)\).  Set
\(F_0=N(z)\setminus A\), \(F_1=N(u)\setminus A\), and
\(F_2=N(v)\setminus A\).  These sets all have two elements, and
\(F=F_0\cup F_1\cup F_2\); hence
\(f\le6\).  Every vertex of \(K=F_1\cup F_2\) has degree at least
\(4\), because \(D_3(G)\) is independent.  In particular,
\begin{equation}\label{eq:endpoint-k-excess}
  |K|\le \varepsilon_F.
\end{equation}

The excess on \(A\) is \(1\), and exactly six edges join \(A\) to \(F\).
The corresponding degree and incidence identities are
\begin{align}
 \varepsilon_F+\varepsilon_B&=n-9,                 \label{eq:endpoint-excess}\\
 q+I_B&=3b+\varepsilon_B,                           \label{eq:endpoint-bulk}\\
 I_F+q&=3f+\varepsilon_F-6.                         \label{eq:endpoint-moat}
\end{align}
Also \(q\le \varepsilon_F+2f\).  Combining this bound with
\eqref{eq:endpoint-bulk} and \(I_B\le2\varepsilon_B\) gives
\(3b\le n-9+2f\).  Since \(b=n-3-f\),
\begin{equation}\label{eq:endpoint-f}
  2n\le5f.
\end{equation}

For \(n=10\), \eqref{eq:endpoint-excess} gives
\(\varepsilon_F+\varepsilon_B=1\), contrary to \(|K|\ge2\) and
\eqref{eq:endpoint-k-excess}.  For \(n=11\),
\eqref{eq:endpoint-f} gives \(f\ge5\), so
\(|K|\ge f-|F_0|\ge3>\varepsilon_F+\varepsilon_B=2\), again a contradiction.

For the four remaining orders, equations
\eqref{eq:endpoint-excess}--\eqref{eq:endpoint-f}, the evenness of
\(I_B\), and \eqref{eq:endpoint-k-excess} give precisely the following
profiles.  A tuple records
\((f,b;\varepsilon_F,\varepsilon_B;I_B,q,I_F)\):
\[
\begin{array}{c|c}
n&\text{possible profiles}\\ \hline
12&(5,4;3,0;0,12,0)\\
13&(6,4;4,0;0,12,4)\\
14&(6,5;5,0;0,15,2),\
    (6,5;4,1;0,16,0),\
    (6,5;4,1;2,14,2)\\
15&(6,6;6-j,j;2j,18-j,0),\quad j=0,1,2.
\end{array}
\]
These profiles follow directly from the preceding constraints.  At \(n=12\),
\eqref{eq:endpoint-f} gives \(f\ge5\); if \(f=6\), then
\(|K|\ge4>\varepsilon_F+\varepsilon_B=3\).  Thus \(f=5\), after which
\eqref{eq:endpoint-k-excess} forces \(\varepsilon_F=|K|=3\) and
\(\varepsilon_B=0\).
At \(n=13,14,15\), equation \eqref{eq:endpoint-f} forces \(f=6\).
The three pairs \(F_i\) are then disjoint, so \(|K|=4\) and
\(\varepsilon_B\le n-13\).  Substitution in
\eqref{eq:endpoint-bulk} and \eqref{eq:endpoint-moat}, together with
\(0\le I_B\le2\varepsilon_B\), gives the displayed rows.

It remains to eliminate the profiles.

\smallskip
\noindent\emph{Orders \(12\) and \(13\).}
In both rows, every vertex of \(B\) has degree \(3\).  Moreover,
\(\varepsilon_F=|K|\); since every vertex of \(K\) contributes at least one
unit to \(\varepsilon_F\), no excess remains on \(F\setminus K\).  For
\(n=12\), the equalities
\(|F_0|=2\), \(|K|=3\), and \(|F|=5\) also give \(F_0\cap K=\varnothing\);
for \(n=13\), this follows from the pairwise disjointness noted above.
Hence both vertices of \(F_0\) have degree
\(3\).  They cannot be adjacent to \(B\), so, besides their common
neighbor \(z\), each has two neighbors in \(F\).  This is impossible
when \(n=12\), where \(I_F=0\).  When \(n=13\), two distinct vertices
of internal degree \(2\) require at least three edges in \(G[F]\), and
hence \(I_F\ge6\), contrary to \(I_F=4\).

\smallskip
\noindent\emph{Order \(14\).}
In the first profile, at least one vertex of \(F_0\) has degree \(3\);
it has two neighbors in \(F\), so \(I_F\ge4\), contrary to \(I_F=2\).
In the second profile, both vertices of \(F_0\) have degree \(3\) and
\(I_F=0\).  Each would need two distinct neighbors of degree at least
\(4\) in \(B\), although \(\varepsilon_B=1\) implies
\(|\{x\in B:d(x)\ge4\}|\le1\).
In the last profile, both vertices of \(F_0\) again have degree \(3\),
and \(|\{x\in B:d(x)\ge4\}|\le1\).  Each vertex
of \(F_0\) must therefore meet \(F\).  Since \(I_F=2\), the unique edge
of \(G[F]\) joins the two vertices of \(F_0\).  Together with \(z\)
they form a triangle of degree sum \(4+3+3=10\), and
  Proposition~\ref{lem:cut-certificates}(iii) applies.

\smallskip
\noindent\emph{Order \(15\).}
Here \(I_F=0\), so every vertex of \(F\) has one neighbor in \(A\) and
all its remaining neighbors in \(B\).  If \(\varepsilon_B=1\), then at least one
vertex of \(F_0\) has degree \(3\) and needs two distinct high-degree
neighbors in \(B\), whereas \(\varepsilon_B=1\) permits only one.

If \(\varepsilon_B=2\), both vertices of \(F_0\) have degree \(3\).  The set
\(B_+=\{x\in B:d(x)\ge4\}\) consequently consists of two degree-\(4\)
vertices, and both vertices of \(F_0\) are adjacent to both members of
\(B_+\).  For \(Q=\{z\}\cup F_0\cup B_+\), the five vertices, listed with
\(z\) first, have respectively \(\le2,0,0,2,2\) neighbors outside \(Q\).
Thus \(\partial Q\le6\).
Since \(15\cdot6\le2\cdot5\cdot10\),
Proposition~\ref{lem:cut-certificates}(i) applies.

Finally, suppose \(\varepsilon_B=0\).  Every vertex of \(B\) has degree \(3\).
The absence of edges in \(F\), together with independence of \(D_3(G)\),
forces every vertex of \(F\) to have degree \(4\).  Thus the bipartite
graph between \(F\) and \(B\) is \(3\)-regular on both sides.
Apply Lemma~\ref{lem:six-column} to the three pairs
\(F_0,F_1,F_2\), and denote the two common neighbors in \(B\) by
\(b_1,b_2\).  If the pair \(x,y\) supplied by the lemma lies in one \(F_i\), let
\(p\in A\) be the vertex adjacent to both.  In the order
\((p,x,y,b_1,b_2)\), these vertices have respectively
\(\le2,1,1,1,1\) neighbors outside their set, so
\(\partial\{p,x,y,b_1,b_2\}\le6\).  If instead \(x\in F_0\) and
\(y\in F_1\cup F_2\), let \(p,q\in A\) be adjacent to \(x,y\), respectively.
The vertices \(p,q\) are adjacent, and in the order
\((p,q,x,y,b_1,b_2)\), the six vertices have respectively
\(\le2,1,1,1,1,1\) neighbors outside their set, giving
\(\partial\{p,q,x,y,b_1,b_2\}\le7\).  In the two cases,
\(15\cdot6\le2\cdot5\cdot10\) and
\(15\cdot7\le2\cdot6\cdot9\), respectively, so
Proposition~\ref{lem:cut-certificates}(i) completes the proof.
\end{proof}

\begin{lemma}[Shared degree-\(3\) hub]\label{lem:sharp-shared-hub}
Let \(n\ge10\), let \(G\) have \(n\) vertices and \(2(n-2)\) edges,
and suppose that \(\delta(G)\ge3\) and \(D_3(G)\) is independent.
If a degree-\(4\) vertex has two distinct neighbors in \(D_3(G)\), then
\(\ac(G)\le2\).
\end{lemma}

\begin{proof}
Let the hub be \(z\), let \(u,v\) be the two degree-\(3\) neighbors,
and define \(A,F,B,f,b,q,\varepsilon_F,\varepsilon_B\), and \(I_B\) as in
Proposition~\ref{prop:endpoint-shared-hub}.  The set \(A\) is
two-sparse, \(f\le6\), \(B\ne\varnothing\), and there is no edge from
\(A\) to \(B\).  The same identities give
\[
 q\le \varepsilon_F+2f,\qquad q+I_B=3b+\varepsilon_B,\qquad
 \varepsilon_F+\varepsilon_B=n-9.
\]
If \(q\le2b\), then \(e(A,F)=6=2|A|\), and the two-cluster
certificate applies.  If \(I_B>2\varepsilon_B\),
Proposition~\ref{prop:sparse-set-principle}(ii) produces a two-sparse
nonempty subset of \(B\); together with \(A\), part~(i) gives the
contradiction.

We may therefore assume \(q>2b\) and \(I_B\le2\varepsilon_B\).
As in \eqref{eq:endpoint-f}, these inequalities imply
\(2n\le5f\le30\).  Thus \(n\le15\), and
Proposition~\ref{prop:endpoint-shared-hub} completes the proof.
\end{proof}

\begin{theorem}[Orders below thirty-two]\label{thm:range-4-31}
If \(4\le n\le31\) and \(G\) is a graph on \(n\) vertices with
\(2(n-2)\) edges, then \(\ac(G)\le2\).
\end{theorem}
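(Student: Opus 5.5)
The plan is to assemble the pieces already established in Section~\ref{sec:orders-below-thirty-two}, splitting the range at \(n=10\). For \(4\le n\le9\), Proposition~\ref{prop:orders-4-9} gives \(\ac(G)\le2\) directly, so only \(10\le n\le31\) remains.

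For \(10\le n\le31\), we argue by contradiction and suppose \(\ac(G)>2\). The first step is to place \(G\) in the obstruction class, exactly as in the proof of Theorem~\ref{thm:range-ge32}. If some vertex \(u\) had degree \(\le2\), its closed neighborhood would have at most \(3<n\) vertices, so \(T=V\setminus(\{u\}\cup N(u))\) would be nonempty. Lemma~\ref{lem:low-degree} would then give \(\ac(G)\le2\); hence \(\delta(G)\ge3\). Since \(n\ge10\), Lemma~\ref{lem:adjacent-three} shows that no two degree-\(3\) vertices are adjacent. Thus \(D_3(G)\) is independent, and \(G\) is a degree-\(3\)-separated obstruction in the sense of Definition~\ref{def:separated-obstruction}.

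Next, Lemma~\ref{lem:shared-hub-existence} applies because \(10\le n\le31\). It supplies a degree-\(4\) vertex \(z\) with at least two distinct neighbors \(u,v\in D_3(G)\). The hypotheses of Lemma~\ref{lem:sharp-shared-hub} are exactly \(n\ge10\), \(2(n-2)\) edges, \(\delta(G)\ge3\), \(D_3(G)\) independent, and such a hub. All of these now hold, so that lemma yields \(\ac(G)\le2\), which is the desired contradiction.

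No step here should be a real obstacle, since the substantive work (the endpoint census for \(10\le n\le15\) and the incidence count forcing a shared hub) is contained in the cited results. The only care needed is to confirm that the threshold \(n\ge10\) in Lemmas~\ref{lem:adjacent-three} and~\ref{lem:sharp-shared-hub} matches the lower end of the range, which it does. Combining this theorem with Theorem~\ref{thm:range-ge32} then yields Theorem~\ref{thm:main} for every \(n\ge4\).
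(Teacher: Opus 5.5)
Your proposal is correct and matches the paper's proof step for step. You use Proposition~\ref{prop:orders-4-9} for \(n\le9\), then for \(10\le n\le31\) reduce a hypothetical counterexample to a degree-\(3\)-separated obstruction via Lemmas~\ref{lem:low-degree} and~\ref{lem:adjacent-three}, and finish by combining Lemma~\ref{lem:shared-hub-existence} with Lemma~\ref{lem:sharp-shared-hub}.
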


\begin{proof}
Proposition~\ref{prop:orders-4-9} handles \(n\le9\).  Suppose
\(10\le n\le31\).  If \(G\) has a vertex of degree \(\le2\),
Lemma~\ref{lem:low-degree} applies.  We may therefore assume
\(\delta(G)\ge3\).  If two degree-\(3\) vertices are adjacent, use
Lemma~\ref{lem:adjacent-three}.  Otherwise, a counterexample would be a
degree-\(3\)-separated obstruction.  Lemma~\ref{lem:shared-hub-existence}
produces a degree-\(4\) vertex with two degree-\(3\) neighbors, contrary to
Lemma~\ref{lem:sharp-shared-hub}.
\end{proof}

The ranges established in Sections~\ref{sec:obstruction-closure}
and~\ref{sec:orders-below-thirty-two} are disjoint and exhaustive.

\begin{proof}[Proof of Theorem~\ref{thm:main}]
Apply Theorem~\ref{thm:range-4-31} for \(4\le n\le31\) and
Theorem~\ref{thm:range-ge32} for \(n\ge32\).
\end{proof}

\section{Conclusion}\label{sec:conclusion}

We have proved Kolokolnikov's conjecture for every \(n\ge4\).  The proof begins
with a common spectral reduction: explicit Rayleigh test vectors rule out
local configurations until only a rigid degree-\(3\)-separated obstruction
remains.  Degree counting rules out this obstruction directly for
\(32\le n\le49\), where the required incidences exceed what the higher-degree
vertices can accommodate, and indirectly for \(n\ge48\), where an exact Moore
bound forces a short cycle that the spectral certificates forbid.  At the
remaining small orders, shared-hub and sparse-set arguments produce the
required Rayleigh certificates.

Since \(K_{2,n-2}\) has algebraic connectivity \(2\), it is a maximizer
for every \(n\ge4\).  The proof does not classify all equality cases;
determining whether other maximizers occur, and describing them when they
do, remains a separate extremal problem.
\appendix

\section{Arithmetic details for the Moore closure}
\label{app:moore-arithmetic}

This appendix supplies the endpoint calculations used in
Lemma~\ref{lem:moore-power-growth}.  They verify the uniform power estimates;
the graph-theoretic application remains in Section~\ref{sec:obstruction-closure}.

\subsection{Small exponents}

Put \(M=50\ell+3t-41\).  The hypotheses of
Lemma~\ref{lem:moore-power-growth} give \(5v\le M\), and \(t\le v\) gives
\(5t\le M\).

First suppose that \(\ell=4\), so \(M=159+3t\).  The inequality
\(5t\le M\) gives \(t\le79\).  To verify
\((t-9)M^4\le(M+10t)^4\), set \(u=t-43\), so \(0\le u\le36\).  Direct
expansion gives
\begin{align*}
 &(M+10t)^4-(t-9)M^4\\
 &\quad={}
 u^3\bigl(773272-81u^2-5297u\bigr)
 +83801688u^2+2621645152u+31854951952.
\end{align*}
The coefficient in parentheses decreases on \([0,36]\) and has value
\(477604\) at \(u=36\), so the expression is nonnegative.  Since
\(M,v>0\) and \(5v\le M\),
\[
 t-9\le\left(\frac{M+10t}{M}\right)^4
 \le\left(\frac{5v+10t}{5v}\right)^4
 =\left(\frac{v+2t}{v}\right)^4.
\]
This proves Lemma~\ref{lem:moore-power-growth}\textup{(i)} when \(\ell=4\).

Now suppose that \(5\le\ell\le7\).  We first prove
\begin{align}
 3(t-10)v^3\le{}&6\ell t v^2
 +6\ell(\ell-1)t^2v
 +4\ell(\ell-1)(\ell-2)t^3. \label{eq:small-exponent-cubic}
\end{align}
For real \(x\), define
\begin{align*}
 \Phi(x)={}&30\ell t x^2+150\ell(\ell-1)t^2x
       +500\ell(\ell-1)(\ell-2)t^3\\
       &-3(t-10)x^3.
\end{align*}
The hypotheses give \(5t\le5v\le M\), and
\eqref{eq:small-exponent-cubic} is equivalent to \(\Phi(5v)\ge0\).  On
\([5t,M]\),
\[
 \Phi''(x)=60\ell t-18(t-10)x
 \le t(1320-90t)<0,
\]
so \(\Phi\) is concave and its minimum occurs at an endpoint.

Set \(a=\ell-5\) and \(b=t-43\).  Then \(0\le a\le2\), while
\(t\le v\le107\) gives \(0\le b\le64\).  At the left endpoint,
\[
 \frac{\Phi(5t)}{125t^3}
 =4a^3+54a^2+248a+54+3(79-b)\ge0.
\]
For the other endpoint, define
\begin{align*}
 Q_3(b)&=500b^3+72000b^2+3118500b+44471000,\\
 Q_2(b)&=6450b^3+872400b^2+36222750b+504355800,\\
 Q_1(b)&=23770b^3+3057300b^2+121507590b+1658324560.
\end{align*}
Then
\begin{align*}
 \Phi(M)={}&a^3Q_3(b)+a^2Q_2(b)+aQ_1(b)
 +b^3(10299-81b)\\
 &+2032188b^2+82837380b+1174137072.
\end{align*}
Every term is nonnegative because \(10299-81b\ge5115\).  Thus
\(\Phi(5v)\ge0\), proving \eqref{eq:small-exponent-cubic}.

Finally, the first three nonconstant terms of the binomial expansion give
\begin{align*}
3(v+2t)^\ell\ge{}&3v^\ell+6\ell t v^{\ell-1}
 +6\ell(\ell-1)t^2v^{\ell-2}\\
&+4\ell(\ell-1)(\ell-2)t^3v^{\ell-3}.
\end{align*}
After multiplying \eqref{eq:small-exponent-cubic} by
\(v^{\ell-3}\), the last three terms are at least
\(3(t-10)v^\ell\).  Hence
\(3(v+2t)^\ell\ge3(t-9)v^\ell\), proving part~\textup{(i)}.

\subsection{Large exponents}

Suppose that \(\ell\ge8\), and put
\(c_\ell=\ell(\ell-1)(\ell-2)(\ell-3)\).  We claim that
\begin{equation}\label{eq:fourth-term-region}
 3v^4\le2c_\ell t^3.
\end{equation}
If \(t=43\), then the linear hypothesis gives
\(v\le10\ell+17\).  Writing \(u=\ell-8\), direct expansion gives
\begin{align*}
2c_\ell\cdot43^3-3(10\ell+17)^4
={}&129014u^4+2970364u^3+22976314u^2\\
 &+59988164u+1555677>0.
\end{align*}
Together with \(v\le10\ell+17\), this proves
\eqref{eq:fourth-term-region} when \(t=43\).

Now suppose that \(t\ge44\), and set \(M_s=50\ell+3s-41\) for \(s\ge44\).
The hypotheses give \(5v\le M_t\), while \(t\le v\) gives \(M_t\ge5t\).
At \(t=44\), again writing \(u=\ell-8\),
\begin{align*}
1250c_\ell\cdot44^3-3(50\ell+91)^4
={}&87730000u^4+2031980000u^3+15877835000u^2\\
&+42485217400u+4526254317>0.
\end{align*}
For fixed \(\ell\), the ratio \(M_s^4/s^3\) does not increase with
\(s\) while \(M_s\ge5s\).  Indeed, for \(y=1/s\),
\[
 (1+y)^3-\left(1+\frac{3y}{5}\right)^4
 =\frac{y}{625}\bigl(375+525y+85y^2-81y^3\bigr)\ge0,
\]
because \(0<y\le1\).  Since \(M_{s+1}=M_s+3\), this gives
\[
 \left(\frac{M_{s+1}}{M_s}\right)^4
 \le\left(1+\frac{3}{5s}\right)^4
 \le\left(1+\frac1s\right)^3.
\]
The difference \(M_s-5s=50\ell-41-2s\) decreases with \(s\), so
\(M_t\ge5t\) implies \(M_s\ge5s\) for every integer \(44\le s\le t\).
The preceding comparison says precisely that
\(M_{s+1}^4/(s+1)^3\le M_s^4/s^3\).  Thus the inequality at \(s=44\)
propagates to \(s=t\), giving
\(3M_t^4\le1250c_\ell t^3\).  Since \(5v\le M_t\),
\[
 625\cdot3v^4=3(5v)^4\le3M_t^4\le1250c_\ell t^3.
\]
Division by \(625\) proves \eqref{eq:fourth-term-region}.

The fourth nonconstant term of the binomial expansion now gives
\[
\begin{aligned}
3(v+2t)^\ell
&\ge3v^\ell+2c_\ell t^4v^{\ell-4}\\
&\ge3v^\ell+3t v^\ell
 =3(t+1)v^\ell.
\end{aligned}
\]
This proves Lemma~\ref{lem:moore-power-growth}\textup{(ii)}.

\section{Formalization and AI agent design}
\label{sec:formalization}

Conjecture~\ref{conj:kolokolnikov} has been formalized and proved for every
\(n\ge4\) in the Lean~4 proof assistant~\cite{demoura2021} over the
\emph{mathlib} library~\cite{community2019leanmathematicallibrary}, and the formal development was
generated by \emph{MerLean} \cite{li2026merleanproverrecursiveloopingharness,ren2026merleanagenticframeworkautoformalization}, an autonomous Lean-based
theorem-proving AI agent described below.  The canonical machine-checked
statement reads

\begin{center}
\begin{minipage}{0.96\linewidth}
\small
\begin{verbatim}
theorem ACMax.acmax_conjecture (n : ℕ) (hn : 4 ≤ n) [Nonempty (Fin n)] :
    algConn (completeBipartiteGraph (Fin 2) (Fin (n - 2))) = 2 ∧
      ∀ G : SimpleGraph (Fin n), G.edgeFinset.card = 2 * (n - 2) → algConn G ≤ 2
\end{verbatim}
\end{minipage}
\end{center}

\noindent where \texttt{algConn} denotes the algebraic connectivity
\(\ac\), defined as the second-smallest eigenvalue of the graph
Laplacian using mathlib's spectral theory.  The formal development also
proves the Rayleigh-quotient characterization used throughout this paper.
The use of \texttt{Fin n} merely fixes a labeling of the vertices: every
finite simple graph of order \(n\) has an isomorphic representative on this
type.
Lean reports that the theorem depends only on the three standard axioms
\texttt{propext}, \texttt{Classical.choice}, and \texttt{Quot.sound};
\texttt{native\_decide} is never invoked, so every finite computation
is replayed by the proof kernel itself.  An independent audit with the
\texttt{comparator} tool of the Lean
developers\footnote{\url{https://github.com/leanprover/comparator}}
verified the statement at the level of exported terms, enforced the
axiom whitelist, and replayed the exported proof and all of its dependencies
through two independent kernel implementations, both of which accept it.

No part of the formal development was written by hand.
MerLean is constructed from large-language-model agents
(Claude Code with Fable and Opus, Anthropic) operating
over a persistent plan graph.\footnote{\url{https://github.com/MerLeanProver/MerLean}}
Its guiding design principle is a minimal harness: the system contains
little hand-crafted mathematical or Lean-specific guidance, its
capability being carried instead by its memory.  The memory core
builds on the graph-based architecture
Mem0\(^{g}\)~\cite{chhikara2025mem0buildingproductionreadyai}, whose node-and-edge representation
corresponds naturally to the dependency structure of a Lean
development, with semantic retrieval following the embedding strategy
of LeanSearch~\cite{gao2025semanticsearchenginemathlib4}.  Compile-fix subagents prove scaffolded
files concurrently, and a completed node is admitted to the library
only after a fresh kernel-level check of its statement and axiom
footprint; applied to an open question, the system first formalizes the
statement and then pursues several independent proof strategies in
parallel.  Every numerical certificate was cross-checked by exact
big-integer enumeration before formalization; agent output is treated
throughout as untrusted search, the mathematical guarantee resting
entirely on the kernel checks above.  The system has been released as
open source as a lightweight package intended for use in combination
with other agent skills.

The campaign took eighteen days in total and comprised \(694\) lemma
nodes; its raw output of \(671\) Lean files (approximately
\(240{,}000\) lines) was reduced by dependency analysis to the
published import closure of \(63\) Lean files (the root module and its
\(62\) local dependencies), totaling \(29{,}938\) source lines.  Where per-agent
accounting was enabled, \(214\) compile-fix invocations consumed
\(2.5\times10^{7}\) language-model tokens and \(65\) agent-hours; these
figures are lower bounds.

Every component of the mathematical proof has a formal counterpart: the
certificate toolbox of Section~\ref{sec:certificates}, the global degree
constraints of Section~\ref{sec:census}, the exact cycle criteria and Moore
argument of Sections~\ref{sec:cycles} and~\ref{sec:obstruction-closure}, and
the low-order structural arguments of
Section~\ref{sec:orders-below-thirty-two}.  The presentation here reorganizes
those components as one self-contained mathematical proof.
Theorem~\ref{thm:main} is the mathematical form of the machine-checked
theorem displayed above.

\section*{Data and code availability}

The formal source associated with this result is available at
\url{https://github.com/MerLeanProver/ACMaxConjecture}.
No external dataset is used in this paper.

\printbibliography

\end{document}